\documentclass[11pt]{article}

\usepackage{amsmath, amsfonts, amsthm, graphicx,amssymb}
\usepackage[top=21mm, bottom=22mm, left=21mm, right=21mm]{geometry}
\usepackage{comment}
\usepackage{hyperref}
\hypersetup{
	colorlinks=true,
	linkcolor=blue,
	filecolor=magenta,      
	urlcolor=cyan,
	citecolor=blue
}
\usepackage{enumerate}
\usepackage{aliascnt} 
\usepackage[none]{hyphenat}
\usepackage{tikz, tikz-3dplot}
\usetikzlibrary{calc, patterns,intersections, scopes}
\usetikzlibrary{positioning,arrows,shapes,decorations.markings,decorations.pathreplacing, decorations.pathmorphing,matrix,patterns}
\tikzstyle{vtx}=[circle,draw=black,fill=black,inner sep=0,minimum size=5pt,text=white,font=\footnotesize]

\newtheorem{theorem}{Theorem}[section]

\newaliascnt{lemma}{theorem}
\newtheorem{lemma}[lemma]{Lemma}
\aliascntresetthe{lemma}
\newaliascnt{claim}{theorem}
\newtheorem{claim}[claim]{Claim}
\aliascntresetthe{claim}
\newaliascnt{question}{theorem}

\aliascntresetthe{question}

\usepackage{cleveref}
\crefname{lemma}{lemma}{lemmas}
\Crefname{lemma}{Lemma}{Lemmas}
\crefname{claim}{claim}{claims}
\Crefname{claim}{Claim}{Claims}
\crefname{question}{question}{questions}
\Crefname{question}{Question}{Questions}
\newcommand{\refD}[1]{Definition~\ref{#1}}

\theoremstyle{definition}
\newtheorem{definition}{Definition}

\newcommand{\cV}{V}
\newcommand{\cS}{N}
\newcommand{\cB}{I}

\newcommand{\eps}{\varepsilon}

\newcommand{\cP}{\mathcal{P}}

\DeclareMathOperator{\cng}{cong}
\DeclareMathOperator{\cube}{cube}
\DeclareMathOperator{\dist}{dist}

\DeclareMathOperator{\conv}{conv}
\DeclareMathOperator{\supp}{supp}

\title{Random 0/1-polytopes expand rapidly}
\author{He Guo\thanks{Ume\r{a} University, \emph{e-mail}: \texttt{he.guo@umu.se}, Research supported in part by the Kempe Foundation JCSMK23-0055.}\and 
Istv\'an Tomon\thanks{Ume\r{a} University, \emph{e-mail}: \texttt{istvan.tomon@umu.se}, Research supported by the Swedish Research Council VR 2023-03375.}}
\date{}

\begin{document}

\maketitle
\sloppy

\begin{abstract}
A 0/1-polytope is the convex hull of a subset $V\subseteq \{0,1\}^n$. A celebrated conjecture of Mihail and Vazirani asserts that the graph of every 0/1-polytope has edge-expansion at least 1.  In this paper, we show that typical 0/1-polytopes have significantly stronger expansion. Specifically, if $V$ is formed by sampling each vertex of $\{0,1\}^n$ independently with constant probability $p$, then with high probability the edge-expansion is $\Theta(n)$ for $p \in (1/2, 1)$, and $n^{\Theta(\log \log n)}$ for $p \in (0, 1/2)$. This improves the previously best known bound $\Omega(1)$ due to Ferber, Krivelevich, Sales and Samotij.
\end{abstract}

\section{Introduction}

A \emph{0/1-polytope} in $\mathbb{R}^n$ is the convex hull of a subset of the $n$-dimensional hypercube $\{0,1\}^n$.  These polytopes play a central role in polyhedral combinatorics and combinatorial optimization. Their significance stems from the fact that many algorithmic problems require finding an optimal subset of items from a finite universe, which can be naturally formulated as an integer linear program minimizing a linear cost function $c^Tx$ over a feasible set $V \subseteq \{0,1\}^n$. Because minimizing a linear function over a discrete set is mathematically equivalent to minimizing it over its convex hull, understanding the geometric structure of $\conv(V)$ is crucial for designing efficient algorithms. Consequently, much of polyhedral combinatorics is dedicated to identifying the exact facet descriptions of specific 0/1-polytopes, such as the matching, matroid, and independent set polytopes.

The \emph{graph} (or 1-skeleton) of a polytope $P$ is the graph whose vertices are the 0-dimensional faces of $P$, and whose edges are its 1-dimensional faces. The classical Simplex algorithm solves linear programs by traversing the edges of this graph until reaching an optimal vertex. To guarantee that such path-following algorithms can be fast, the graph of the polytope must have a small diameter. For 0/1-polytopes, a celebrated theorem by Naddef \cite{Naddef} establishes that the diameter is bounded by its dimension, and is therefore at most $n$. Geometrically, this guarantees that an optimal solution is never more than $n$ steps away from any starting point.

While the diameter governs worst-case path lengths, the \emph{edge-expansion} (or Cheeger constant) characterizes the global connectivity. For a graph $G$, the edge-expansion is defined as 
$$h(G)=\min \left\{\frac{|\partial(U)|}{|U|}: U\subseteq V(G),\; 1\le |U|\leq \frac{1}{2}|V(G)|\right\},$$ where $\partial(U)$ is the set of edges with exactly one endpoint in $U$. The edge-expansion of 0/1-polytopes plays a key role in the theory of approximate counting and sampling. Large edge-expansion ensures the rapid mixing of Markov chain random walks, a property utilized to generate elements of combinatorial objects uniformly at random. For instance, analyzing random walks on the matching polytope allowed Jerrum, Sinclair, and Vigoda \cite{JS1,JS2} to design a polynomial-time randomized approximation scheme (FPRAS) for computing the permanent of nonnegative matrices. Motivated by these connections, Mihail and Vazirani \cite{FM,Mihail} famously conjectured that $h(G)\geq 1$ for the graph of \emph{every} 0/1-polytope. The classical result of Harper \cite{Harper} shows that $h(G)=1$ if $V=\{0,1\}^n$ is the entire hypercube. The conjecture of Mihail and Vazirani remains a major open problem, though it  has been established for several specific classes: stable set polytopes, perfect matching polytopes \cite{Kaibel}, order ideal polytopes, matching polytopes, independent set polytopes \cite{Mihail0} and balanced matroid base polytopes \cite{FM}. In a recent breakthrough, Anari, Liu, Gharan, and Vinzant \cite{ALOV} proved the conjecture for matroid base polytopes. 

However, the conjecture remained open even for \emph{random 0/1-polytopes}, whose study was proposed by Kaibel and Remshagen \cite{KR} and Gillmann \cite{Gillmann}. Let $V$ be generated by sampling each vertex of $\{0,1\}^n$ independently with probability $p$, and let $G$ be the graph of $\conv(V)$.  Leroux and Rademacher \cite{LR} proved that $h(G)= \Omega(1/n)$ with high probability (w.h.p.). Here and later, an event, or more precisely a sequence of events $\{A_n\}_{n\in \mathbb{N}}$ happens \emph{w.h.p.} if $\lim_{n\rightarrow \infty}\mathbb{P}(A_n)=1$. This was improved by Ferber, Krivelevich, Sales and Samotij \cite{FKSS}, who established that w.h.p. $h(G)=\Omega(1)$.

In this paper, we significantly improve these results. We show that when $p$ is bounded away from 1, the edge-expansion is much larger, and we determine the correct order of magnitude of $h(G)$ across a wide range of probabilities $p=p(n)$. This resolves questions posed in \cite{FKSS} in a strong form, and  it shows that the Mihail-Vazirani conjecture is true for random 0/1-polytopes.

\begin{theorem}\label{thm:main}
For every $\eps>0$ the following holds. Let $V$ be formed by sampling each vertex of $\{0,1\}^n$ independently with probability $p=p(n)$, and let $G$ be the graph of~$\conv(V)$. Then w.h.p.:
\begin{enumerate}[(a)]
    \item \label{eq:maina} $h(G)=\Theta(n)$ if $p\in (1/2+\eps,1-\eps)$,
    \item \label{eq:mainb} $h(G)=n^{\Theta(\log\log n+\log (1/p))}$ for $p\in (n^{-0.05},1/2-\eps)$.
\end{enumerate}
The constant hidden by $\Theta(\cdot)$ may depend on $\eps$.
\end{theorem}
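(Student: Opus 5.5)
Both parts split into an upper bound, which comes from a test set $U$, and a lower bound, which is the real content. I would argue the lower bound through a canonical path (multicommodity flow) construction. If every pair $x,y\in V$ can be joined by a path in $G$ so that each edge of $G$ carries at most $C$ paths, then for any $U$ with $|U|\le |V|/2$ we get $|U|\,|V\setminus U|\le C|\partial U|$, and hence $h(G)\ge |V|/(2C)$. So the aim is to route all $|V|^2$ pairs with congestion roughly $|V|/n$ in case (a) and $|V|/n^{\Theta(\log\log n+\log(1/p))}$ in case (b).

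The basic geometric tool is a criterion for edges of $\conv(V)$. Two points $x,y\in V$ span an edge if no other point of $V$ lies in the minimal face of the cube containing both, that is, the subcube $Q(x,y)$ in which the coordinates where $x$ and $y$ agree are fixed. More generally, $xy$ is an edge whenever there is a linear functional maximized exactly on $\{x,y\}$.
\begin{itemize}
\item \emph{Regime (b), $p<1/2$.} Pairs at Hamming distance $d$ are edges when $Q(x,y)\cap V=\{x,y\}$. This happens with probability $(1-p)^{2^d-2}$, which is non-negligible for $2^d\approx \log n/p$. Pairs at distance $d\approx \log\log n+\log(1/p)$ therefore give each vertex degree about $\binom{n}{d}\,p\,(1-p)^{2^d}$. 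Optimizing over $d$ yields $n^{\Theta(\log\log n+\log(1/p))}$, which is the predicted degree and expansion.
\item \emph{Regime (a), $p>1/2$.} Empty subcubes are rare beyond constant dimension. I would instead use that, in a dense random set, a vertex $x$ has edges to many points at distance $2$: $x,y$ differ in coordinates $i,j$ and the two middle points of that square are absent, which has probability $(1-p)^2$. That gives $\Theta(n^2)$ edges. I would also check that for $p>1/2$ the degree is only $\Theta(n)$ effectively, by showing that most such squares are cut off by the presence of other vertices via a Farkas-type certificate.
\end{itemize}

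For the upper bounds I would take $U$ to be a half-cube $\{x\in V:x_1=0\}$. Every edge leaving $U$ joins $x$ with $x_1=0$ to $y$ with $y_1=1$, and the number of such edges is controlled by the degree. In (a) I must show that a typical $x$ has only $O(n)$ neighbours across the hyperplane. The natural argument is that for $p>1/2$, if $y$ differs from $x$ in a set $S$ with $|S|\ge 2$, then w.h.p. some point $z\in V\cap Q(x,y)$ witnesses that $[x,y]$ is not an edge: the midpoint $(x+y)/2$ equals an average of other points of $V$ in the subcube. Each pair of antipodal points of $Q(x,y)$ is present together with probability $p^2$, which is large. In (b) the degree bound $n^{O(\log\log n+\log 1/p)}$ follows similarly: for $d\gg\log\log n+\log(1/p)$ the subcube contains many points of $V$, and I expect these can be used to show $x,y$ are not adjacent.

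For the lower bounds I route between $x$ and $y$ with random canonical paths. I would fix a random ordering of the coordinates where $x$ and $y$ differ and flip them in blocks. In regime (b) the blocks have size about $\log\log n+\log(1/p)$ and each step uses an empty-subcube edge, with random intermediate choices spreading the load. Symmetry of the cube plus concentration (Chernoff together with a union bound over the $2^n$ vertices, using $p\ge n^{-0.05}$) should make every edge's congestion comparable to its average, which is $|V|\cdot(\text{path length})/|E(G)|$. Path length is about $n/d$ and $|E|\approx |V|\cdot \deg$, so the congestion is $\approx |V| n/(d\deg)$. This already loses a factor $n/d$ against the target, so the paths must be shorter. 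I would use Naddef-type direct routes instead: long edges spanning many coordinates, found by choosing random intermediate vertices in $V$ (a two-hop route $x\to z\to y$ with $z$ uniform). This requires showing that $xz$ and $zy$ are edges with reasonable probability, which fails for far-apart pairs.

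The main obstacle is therefore this step: building low-congestion paths whose edges exist w.h.p., and proving adjacency for the long edges they need via a linear-functional (separating hyperplane) certificate rather than empty subcubes. I would first try handling it by dividing the coordinates into $O(1)$ random blocks, so that each path uses only $O(1)$ long edges, each verified by an explicit functional.
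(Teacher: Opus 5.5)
Your plan has genuine gaps in both directions of (a) and in the lower bound of (b).

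\textbf{Upper bound in (a).} This part rests on a false claim. For $p\in(1/2+\eps,1-\eps)$ a typical $x$ has far more than $O(n)$ neighbours across $\{x_1=0\}$. If $x,y\in V$ differ in two coordinates, then $xy$ is a non-edge exactly when both middle points of the square $\cube(x,y)$ lie in $V$. (With only one of them present, $[x,y]$ is an edge of the triangular face $\conv(V\cap\cube(x,y))$.) Both middle points are present with probability $p^2\le 1-\eps$. So the pairs $y=x\oplus e_1\oplus e_j$ alone give $\Theta(n)$ cross-neighbours, and empty subcubes at any constant distance $s$ give $\Omega(n^{s-1})$ more. Your midpoint certificate is also not w.h.p.\ for small $|S|$: for $|S|=2$ there is a single antipodal pair.

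Typical vertices are not the bottleneck; rare ones are. Since $2^np^{n+1}\to\infty$, a second-moment argument gives w.h.p.\ a vertex $u$ with $B_1(u)\subseteq V$. Every $v$ with $\dist_H(u,v)\ge 2$ is then cut off, because $v-u$ is a sum of vectors $w-u$ with $w\in S_1(u)$ (\Cref{lemma:non-edge}(a)). Hence $\deg u=n$, and $U=\{u\}$ gives $h(G)\le n$. Your half-cube argument with the antipodal-pair certificate is fine for the upper bound in (b); it is essentially the paper's argument.

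\textbf{Lower bounds.} You have no working construction, and the direction you settle on cannot work. By the same antipodal-pair certificate, w.h.p.\ only $o(|V|)$ edges of $G$ have Hamming length $\gg\log\log n+\log(1/p)$. Every $O(1)$-edge or two-hop path between vertices at distance $\Omega(n)$ must use one of them, so such routes force congestion $\omega(|V|)$ and give no bound at all.

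The worry that pushed you there is unfounded. Path length $\le n$ costs only a polynomial factor. In (b) this is absorbed into $\binom{n}{d}=n^{\Theta(d)}$; the paper gets $h\ge c_0^dq^{22}\binom{n}{d}/n\ge n^{d/2}$. In (a) with $d=3$ one has $\binom{n}{3}/n\gg n$.

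What is actually missing is a mechanism that turns each step of a low-congestion flow in $Q_n^d$ into genuine edges of $G$ with uniformly bounded load. A fixed canonical path runs through prescribed points. These lie in $V$ only with probability $p$ and are joined by empty subcubes only with probability $q=p(1-p)^{2^d-2}$. ``Symmetry plus Chernoff'' does not control the load on a specific edge of a random graph.

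The paper's construction works as follows:
\begin{itemize}
\item It replaces each $x_i$ of a shortest $Q_n^d$-path by a random $z_i\in V\cap S_d(x_i)$, drawn from a set of size $\ge b/2$.
\item It joins $z_{i-1}$ and $z_i$, which are at distance $3d$, by a random one of $\ge L$ pure paths of length $7$ in $G_d(V)$.
\item The key \Cref{lemma:main} gives at least $c^dq^7\binom{n}{d}^2$ pure paths with failure probability $\exp(-n^{1.5})$. It is proved via McDiarmid, \Cref{lemma:grid}, and disjointness of the relevant subcubes, and this failure probability survives the union bound.
\item Congestion then reduces to counting pure paths through an edge.
\end{itemize}

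Finally, in (a) you never address the near-full vertices described above. These have few or no empty-subcube edges of length $\ge 2$; a vertex with $B_1(u)\subseteq V$ has only its $n$ unit edges, so all its demand must leave through them. The paper routes each $\alpha$-full vertex through length-$3$ paths in $G_1(V)$ to $\Omega(n^3)$ endpoints, using that every $B_3(x)$ contains $O(1)$ such vertices. This is exactly what yields $\Theta(n)$.
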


The upper bounds on $h(G)$ follow from the degrees in $G$, as we show in \Cref{lemma:upperbound}. For $p\in (1/2+\eps,1]$, w.h.p. there exist vertices whose hypercube neighbors are all contained in $V$, ensuring a degree of $n$ and thus $h(G) \leq n$. Conversely, for $p \in (0, 1/2)$, $G$ contains few edges connecting vertices of Hamming-distance much greater than $\log\log n + \log(1/p)$, which limits the average degree and implies the corresponding upper bound in (\ref{eq:mainb}). Our main contributions are the matching lower bounds, whose proof we outline at the end of this section. We remark that in the case $p\in [1/2-\eps,1/2+\eps]$, which is not covered by \Cref{thm:main}, our proof  shows that $h(G)=\Omega(n)$, and the only obstacle of larger expansion is the existence of vertices whose hypercube neighbors are almost all sampled.

\Cref{thm:main} only applies when $p>n^{-O(1)}$. However, our results extend to smaller values of $p$ as well via a projection argument introduced in \cite{LR}, and strengthened in \cite{FKSS}.

\begin{theorem}\label{thm:general}
Let $c>0$, then there exists $C>0$ such that the following holds. Let $p=p(n)\in (2^{-0.9n},n^{-c})$, and  let $V$ be formed by sampling each vertex of $\{0,1\}^n$ independently with probability $p$. Then w.h.p.  the graph $G$ of~$\conv(V)$ satisfies
$$h(G)\geq n^{C\log n}.$$
\end{theorem}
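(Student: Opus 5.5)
We derive \Cref{thm:general} from \Cref{thm:main}(\ref{eq:mainb}) with the projection argument of \cite{LR,FKSS}. Choose $k\le n$ so that the density of the projected set is constant (say $1/4$), and split the coordinates as $[n]=A\cup B$ with $|A|=k$. For $x\in\{0,1\}^A$ let $V_x=\{y\in\{0,1\}^B:(x,y)\in V\}$, and let $\pi:\{0,1\}^n\to\{0,1\}^A$ be the projection. Then $\pi(V)$ is a random subset of $\{0,1\}^A$ in which each point appears independently with probability $q=1-(1-p)^{2^{n-k}}$. Taking $2^{n-k}\approx 1/p$ makes $q$ a constant in $(0,1/2-\eps)$. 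Since $p\ge 2^{-0.9n}$, we have $k\ge 0.1n$ up to rounding, so $\log k=\Theta(\log n)$. Since $p\le n^{-c}$, we have $n-k\ge c\log_2 n$. These are exactly the ranges where the bound $k^{\Theta(\log\log k)}$ is available. However, $\log\log k$ is only $\log\log n$, which falls short of $n^{C\log n}$. So I would instead pick $k$ so that $q$ is polynomially small in $k$, namely $q\approx k^{-0.04}$. This gives $\log(1/q)=\Theta(\log k)$, and \Cref{thm:main}(\ref{eq:mainb}) then yields $h(G_A)\ge k^{c'\log k}=n^{\Omega(\log n)}$ for the graph $G_A$ of $\conv(\pi(V))$. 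This choice is possible: $q\approx p2^{n-k}$ when this product is small, and the constraints $p\ge 2^{-0.9n}$ and $p\le n^{-c}$ leave room to choose $n-k$ with $p2^{n-k}=k^{-0.04}$ and $k=\Theta(n)$.

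The transfer step needs two facts. First, each edge of $\conv(\pi(V))$ lifts: if $\{x,x'\}$ is an edge of $G_A$, witnessed by a linear functional $c$ on $\mathbb{R}^A$ maximized exactly on the segment, then $\conv(V)$ restricted to the face maximizing $c\circ\pi$ is $\conv(V_x\times\{x\}\cup V_{x'}\times\{x'\})$. By a further argument of \cite{FKSS}, one gets many edges of $G$ between the fibres over $x$ and $x'$. Second, the fibres are small and nearly uniform: w.h.p.\ every nonempty fibre has size $O(\log n)$ or so, since the expected fibre size is $p2^{n-k}=o(1)$ and a union bound over $2^k$ fibres applies. Moreover, the graph induced on each fibre, which is a face when $x$ is a vertex of $\conv(\pi(V))$ (always true, as every 0/1 point is a vertex), is connected by Naddef's result.

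With these facts, given $U\subseteq V$ with $|U|\le |V|/2$, split the fibres into those fully inside $U$, those fully outside, and those that are mixed. A mixed fibre contributes at least one boundary edge inside the fibre by connectivity, and it has $O(\log n)$ points, so mixed fibres account for their share at cost only a $\log n$ factor. For the fully-$U$ fibres, let $W\subseteq \pi(V)$ be their projection. If $|W|\le|\pi(V)|/2$, expansion of $G_A$ gives $h(G_A)|W|$ boundary edges of $G_A$, and each lifts to a boundary edge of $G$. If $|W|>|\pi(V)|/2$, apply expansion to the complement instead. Near-uniform fibre sizes, which hold by concentration since most nonempty fibres are singletons, let us compare $|W|$ with $|U|$ up to constant factors. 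This gives $h(G)\ge h(G_A)/O(\log n)=n^{C\log n}$.

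I expect the main obstacle to be the size comparison when $|W|$ is slightly above $|\pi(V)|/2$ while $|U|\le|V|/2$, together with making the lifting of edges rigorous. For the first, I would handle it by noting that both the fully-$U$ and fully-outside fibre sets have comparable sizes, and taking the smaller side.
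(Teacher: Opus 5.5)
Your choice of projection (image density about $k^{-0.04}$, so that \Cref{thm:main}(b) gives $h(G_A)=n^{\Omega(\log n)}$) matches the paper. The transfer step, however, has a genuine gap in how it handles mixed fibres.

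You charge each mixed fibre only the one boundary edge inside it coming from connectivity. You use the expansion of $G_A$ only between fully-$U$ and fully-outside fibres. This certifies no more than constant expansion. Let $U$ consist of exactly one point from each fibre of size at least $2$, so $|U|\le|V|/2$. Then no fibre lies fully inside $U$, every fibre meeting $U$ is mixed, and your count gives only $|\partial_G(U)|\ge|U|$.

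So the claim that mixed fibres cost ``only a $\log n$ factor'' is false: each one must be charged roughly $h(G_A)$ boundary edges. Knowing that there are many $G$-edges between the fibres over $x$ and $x'$ does not help either, since those edges may all avoid the $U$-points of a mixed fibre.

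What is missing is the lifting property behind the projection lemma of \cite{FKSS} (\Cref{lemma:projection}): if $xx'$ is an edge of $G_A$, then \emph{every} point of $V$ over $x$ has a $G$-neighbour over $x'$. To see this, note that the points over $x$ span a proper face $F$ of the face $\conv(V\cap\pi^{-1}(\{x,x'\}))$ of $\conv(V)$. A vertex $v$ of a proper face $F$ of a polytope $P$ always has an edge of $P$ leaving $F$. The reason is that the edge directions at $v$ generate the cone of $P$ at $v$, so if they all lay in $\mathrm{aff}(F)$, then $P\subseteq\mathrm{aff}(F)$.

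With this property, set $B=\pi(U)$, now including the mixed fibres. Each $G_A$-edge from $B$ to $\pi(V)\setminus B$ gives a distinct boundary edge of $U$: start from any $U$-point over the endpoint in $B$. Hence $|\partial_G(U)|\ge|\partial_{G_A}(B)|$. The paper then finishes with two cases.
\begin{itemize}
\item If $|B|\le|\pi(V)|/2$, use $|B|\ge|U|/n$, since w.h.p.\ no fibre has $n$ points.
\item Otherwise, use $|\partial_{G_A}(B)|=|\partial_{G_A}(B^c)|$ together with $|B^c|\ge|\pi(V)|/3$. The latter follows from $|B|\le|U|\le|V|/2$ and $|V|\le\frac43|\pi(V)|$.
\end{itemize}
This also settles your worry about $|W|$ being slightly above half, with no separate treatment of mixed fibres.

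Two smaller points. First, the maximum fibre size is not $O(\log n)$: the mean fibre size is only polynomially small while there are $2^k$ fibres, so the maximum can be of order $n/\log n$. A bound of $n$ is all that is needed. Second, if $c<0.04$ and $p>n^{-0.04}$, there is no room to project, but in that range \Cref{thm:main}(b) applies directly.
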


For exponentially small $p$, even tighter characterizations exist. In case $p\leq 2^{-5n/6}$, Bondarenko and Brodskii~\cite{BB} proved that $G$ is w.h.p. a clique. Babecki, Elling, and Ferber \cite{BEF} refined this, identifying a threshold $\delta \approx 0.8295$ such that $G$ is a clique if $p \leq 2^{-(\delta+\epsilon)n}$. They also showed that for $p < 2^{-n(1/2+\epsilon)}$, every vertex has degree $(1-o(1))|V|$, implying $h(G) = \Omega(|V|)$. Collectively, these results show that random 0/1-polytopes exhibit super-polynomial edge-expansion for all $p<1/2-\eps$ as long as the size of~$V$ permits.

\subsection{Proof outline}

To establish that a graph $G$ has large edge-expansion, it is sufficient to construct a multicommodity flow with low maximum congestion (see \Cref{sect:flow}). Specifically, for every pair of vertices $x,y\in V(G)$, we  distribute a unit of flow across the paths connecting $x$ and $y$ such that the congestion of every edge $e$, i.e. the total flow traversing $e$, is small. In case $V$ is sampled randomly from $\{0,1\}^n$ with probability $p$, and $G$ is the graph of $\conv(V)$, we construct such a flow as follows. We focus on the regime $p\in (n^{-0.05},1/2-\eps)$, noting that the analysis for $p\in (1/2+\eps,1-\eps)$ is similar. 

Let $d\ll \log\log n+\log (1/p)$  be an odd integer, and let $Q_n^d$ be the distance-$d$ graph on $\{0,1\}^n$, where two vertices are adjacent  if their Hamming-distance is $d$. We first define a flow in $Q_n^d$ where each edge has congestion at most $n2^n/\binom{n}{d}$ (\Cref{lemma:congQnd}), which implies $h(Q_n^d)\geq \binom{n}{d}/(2n)$ (\Cref{thm:congestion}). We then consider a particular  subgraph~$G_d(V)$ of the intersection of~$G$ and~$Q_n^d$ (see \Cref{sect:main}). We argue that the flow of every path in $Q_n^d$ can be locally redistributed to paths in $G_d(V)$ such that no edge becomes too congested. Essentially, we replace each edge $e$ of $Q_n^d$ by a path of length 7 in $G_d(V)$, whose vertices are close in Hamming-distance to the endpoints of~$e$, and stitch these path together to form new paths. This argument draws inspiration from a classic work by H\r{a}stad and Leighton \cite{HL}: even if a constant fraction of nodes in a hypercube-shaped network are randomly destroyed, the remaining nodes can still efficiently simulate the original network by locally rerouting communication through surviving paths.

\subsection{Paper organization}

In the next section, we introduce the main tools and definition used in the proof of \Cref{thm:main}, including concentration inequalites, expansion and network flows, and polytopes. We also prove the upper bounds of \Cref{thm:main}, and we prove \Cref{thm:general} assuming \Cref{thm:main}. Finally, we present the proof of the lower bounds of \Cref{thm:main} in \Cref{sect:main}.

\section{Preliminaries}

\subsection{Concentration inequalities}

We use standard concentration inequalities, which we collect here for the reader's convenience.

\begin{lemma}[Multiplicative Chernoff bound]\label{lemma:chernoff}
Let $X_1,\dots,X_n$ be independent indicator random variables, let $X=X_1+\dots+X_n$ and $\mu=\mathbb{E}X$. If $0\leq \delta$, then
$$\mathbb{P}(X>(1+\delta)\mu)\leq \exp(-\delta^2 \mu /(2+\delta)).$$
In particular, if $2\mu<t$, then
$$\mathbb{P}(X>t)\leq \exp(-t/3).$$
Moreover, if $0\leq \delta<1$, then
$$\mathbb{P}(X<(1-\delta)\mu)\leq \exp(-\delta^2 \mu/2).$$
In particular,
$$\mathbb{P}(X<\mu/2)\leq \exp(-\mu/8).$$
\end{lemma}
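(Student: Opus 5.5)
The plan is the standard exponential moment (Cramér–Chernoff) method, followed by elementary inequalities that extract the two simplified forms. Write $p_i=\mathbb{P}(X_i=1)$, so that $\mu=\sum_i p_i$.

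\textbf{Upper tail.} For $\lambda>0$, Markov's inequality applied to $e^{\lambda X}$ gives $\mathbb{P}(X>(1+\delta)\mu)\le e^{-\lambda(1+\delta)\mu}\,\mathbb{E}e^{\lambda X}$. By independence and $1+x\le e^x$,
$$\mathbb{E}e^{\lambda X}=\prod_i\bigl(1+p_i(e^\lambda-1)\bigr)\le \exp\bigl((e^\lambda-1)\mu\bigr).$$
Choosing $\lambda=\ln(1+\delta)$ yields the classical bound $\bigl(e^\delta/(1+\delta)^{1+\delta}\bigr)^\mu$. It then suffices to prove
$$f(\delta):=(1+\delta)\ln(1+\delta)-\delta-\frac{\delta^2}{2+\delta}\ge 0 \quad\text{for } \delta\ge 0.$$
This is equivalent to $\ln(1+\delta)\ge 2\delta/(2+\delta)$, since $\delta+\delta^2/(2+\delta)=2\delta(1+\delta)/(2+\delta)$. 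The latter inequality holds because both sides vanish at $0$ and the derivatives compare as $1/(1+\delta)\ge 4/(2+\delta)^2$, which is equivalent to $\delta^2\ge 0$. This derivative check is the only mildly delicate step.

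\textbf{The form $2\mu<t$.} If $\mu=0$ then $X=0$ almost surely and the claim is trivial. Otherwise set $1+\delta=t/\mu>2$, so $\delta>1$. Then
$$\frac{\delta^2\mu}{2+\delta}=t\cdot\frac{\delta^2}{(1+\delta)(2+\delta)}.$$
The ratio $\delta^2/((1+\delta)(2+\delta))$ is increasing in $\delta$ and equals $1/6$ at $\delta=1$, which would only give $e^{-t/6}$. To reach $e^{-t/3}$, I would instead apply the sharper bound directly: with $s=t/\mu\ge 2$,
$$\mathbb{P}(X>t)\le \exp\bigl(-\mu(s\ln s-s+1)\bigr),$$
so it remains to show $s\ln s-s+1\ge s/3$ for $s\ge 2$. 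At $s=2$ the left side is $2\ln2-1\approx0.386<2/3$, so this fails near $s=2$. Hence the stated constant needs care. I would check whether the intended hypothesis is, for example, $t\ge 6\mu$ (at $s=6$: $6\ln 6-5\approx5.75\ge 2$). Alternatively, I would accept the form as stated in the regime where the paper uses it, namely $\mu\ll t$. I expect this reconciliation of constants to be the main obstacle, and I would flag it rather than force it.

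\textbf{Lower tail.} Apply the same method with $\lambda<0$, taking $\lambda=\ln(1-\delta)$. This gives $\bigl(e^{-\delta}/(1-\delta)^{1-\delta}\bigr)^\mu$. It then suffices to prove
$$(1-\delta)\ln(1-\delta)+\delta\ge \delta^2/2 \quad\text{for } 0\le\delta<1.$$
Expanding, $(1-\delta)\ln(1-\delta)=-\delta+\sum_{k\ge2}\frac{\delta^k}{k(k-1)}$, and every term of the series is nonnegative, so the inequality holds. Finally, $\delta=1/2$ gives $\mathbb{P}(X<\mu/2)\le\exp(-\mu/8)$.
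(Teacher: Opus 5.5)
Your treatment of the first, third and fourth inequalities is correct. It is the standard Cram\'er--Chernoff argument, and you verify both elementary inequalities you reduce to correctly: $\ln(1+\delta)\ge 2\delta/(2+\delta)$, and the series identity for $(1-\delta)\ln(1-\delta)$. The paper gives no proof of this lemma; it is quoted as a standard tool, so there is no alternative route to compare against.

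Your suspicion about the clause ``if $2\mu<t$ then $\mathbb{P}(X>t)\le \exp(-t/3)$'' is justified, and you can state it more strongly: the clause is false, not merely out of reach of the method. For sums of many indicators with small $p_i$, the exponent $\mu(s\ln s-s+1)$ you computed is asymptotically attained, so the failure you found near $s=2$ is genuine. Concretely, let $X\sim\mathrm{Bin}(n,1/4)$ with $n$ even, so $\mu=n/4$, and take $t=n/2+1/2>2\mu$. Then
$$\mathbb{P}(X>t)\ge\binom{n}{n/2+1}\Bigl(\frac14\Bigr)^{n/2+1}\Bigl(\frac34\Bigr)^{n/2-1}=e^{-(\frac12\ln\frac43+o(1))n},$$
while $e^{-t/3}=e^{-(\frac16+o(1))n}$ and $\frac12\ln\frac43\approx 0.144<\frac16$. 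Numerically the claimed bound already fails at $n=200$.

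What the first inequality does yield is the version you essentially found. If $t\ge 2\mu>0$, write $t=(1+\delta)\mu$ with $\delta\ge 1$. Then $\delta^2/(2+\delta)\ge \delta/3$, hence $\mathbb{P}(X>t)\le e^{-(t-\mu)/3}\le e^{-t/6}$. If one wants $e^{-t/3}$, the hypothesis $t\ge 3\mu$ suffices, since $\delta^2/(2+\delta)\ge (1+\delta)/3$ is equivalent to $(2\delta+1)(\delta-2)\ge 0$.

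Your remark that the paper only uses the clause when $\mu\ll t$ is not accurate. Its single use is in \Cref{lemma:path-system}, with $\mathbb{E}X_e\le C$ and $t=2C$, i.e.\ exactly at the threshold $t=2\mu$. However, the bound asserted there, $e^{-C/3}=e^{-t/6}$, is precisely the corrected statement, so the misstatement does not propagate to the rest of the paper.
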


\begin{lemma}[Chernoff bound]\label{lemma:chernoff_concrete}
Let $X\sim \mbox{Binom}(n,p)$. Then for any $c<p$, 
$$\mathbb{P}(X<cn)\leq \exp(-nD(c||p)),$$
where $D(c||p)=c \log\left(\frac{c}{p}\right) + (1-c) \log\left(\frac{1-c}{1-p}\right)$. In particular, if $p>0.5$, then  there exists $\alpha>0$ such that
$$\mathbb{P}(X<\alpha n)\leq 2^{-(1+\alpha)n}.$$
\end{lemma}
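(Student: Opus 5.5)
The first inequality is the standard Chernoff--Cramér bound, so I would prove it with the exponential moment method. For any $\lambda>0$, the event $\{X<cn\}$ is the same as $\{e^{-\lambda X}>e^{-\lambda cn}\}$. Markov's inequality and independence of the $n$ Bernoulli summands then give
$$\mathbb{P}(X<cn)\le e^{\lambda cn}\,\mathbb{E}e^{-\lambda X}=\left(e^{\lambda c}(1-p+pe^{-\lambda})\right)^n .$$
Next I minimise over $\lambda$. Setting the derivative of $\lambda c+\log(1-p+pe^{-\lambda})$ to zero gives $e^{-\lambda}=\frac{c(1-p)}{p(1-c)}$. This value is less than $1$ exactly because $c<p$, so the optimal $\lambda$ is admissible. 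Substituting it back, the base becomes $\left(\frac{p}{c}\right)^c\left(\frac{1-p}{1-c}\right)^{1-c}=e^{-D(c\|p)}$, which is the claimed bound.

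For the ``in particular'' statement, fix $p>1/2$. Note that $\alpha$ is allowed to depend on $p$; if $p=p(n)$ varies, I read the hypothesis as $p\ge 1/2+\eps$, and $\alpha$ depends on $\eps$. The case $c=0$ is the guiding computation. Here $D(0\|p)=\log\frac{1}{1-p}$. With $\log$ interpreted as the natural logarithm, this exceeds $\log 2$ because $1-p<1/2$. Hence $e^{-nD(0\|p)}=(1-p)^n=2^{-(1+\beta)n}$ for some $\beta>0$; concretely $\beta=\log_2\frac{1}{1-p}-1$.

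The remaining step is a continuity argument, and it is the only point needing care. The function $c\mapsto D(c\|p)$ is continuous on $[0,p)$, with the convention $0\log 0=0$. So for sufficiently small $\alpha>0$ we have $D(\alpha\|p)\ge(1+\beta/2)\log 2$. I then shrink $\alpha$ further so that also $\alpha\le\beta/2$. Applying the first part with $c=\alpha<p$ gives
$$\mathbb{P}(X<\alpha n)\le 2^{-(1+\beta/2)n}\le 2^{-(1+\alpha)n}.$$
If $p$ is only assumed to lie in $[1/2+\eps,1]$, the same works uniformly. The reason is that $D(\alpha\|p)$ is increasing in $p$ for $p>\alpha$, so the worst case is $p=1/2+\eps$, and there $\beta$ is bounded below by a positive constant depending only on $\eps$.

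I do not expect a genuine obstacle. The one point to get right is that $\alpha$ must be chosen small enough to serve two roles at once: it is the threshold fraction $c$, and it also appears in the exponent $1+\alpha$. Taking $\alpha\le\min(\beta/2,\alpha_0)$, where $\alpha_0$ comes from the continuity step, handles both.
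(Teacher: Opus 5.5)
Your proof is correct and complete. The exponential-moment bound, the optimal choice $e^{-\lambda}=\frac{c(1-p)}{p(1-c)}$ (admissible exactly because $c<p$), and the continuity-then-shrink choice of $\alpha$ all check out. The monotonicity $\partial_p D(\alpha\|p)=\frac{p-\alpha}{p(1-p)}>0$ supplies the uniformity over $p\ge 1/2+\eps$ that the paper actually needs in Lemma~\ref{lemma:conditions2}\eqref{b:3}, where $\alpha$ must depend only on $\eps$.

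The paper gives no proof of this lemma and quotes it as a standard Chernoff bound. Your derivation is the standard Cram\'er--Chernoff route it implicitly relies on. Your Markov step also works verbatim for $\mathbb{P}(X\le cn)$, which is the form used in that application.
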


\begin{lemma}[McDiarmid's inequality]\label{lemma:mcdiarmid}
    Let $f:\mathcal{X}_1\times\dots\times \mathcal{X}_n\rightarrow\mathbb{R}$. Assume that if $x,x'\in \mathcal{X}_1\times\dots\times\mathcal{X}_n$ only differ in the $i$-th coordinate, then $|f(x)-f(x')|\leq \Delta_i$. If $X_1\in \mathcal{X}_1,\dots,X_n\in\mathcal{X}_n$ are independent random variables and $X=f(X_1,\dots,X_n)$, then
    $$\mathbb{P}(X\leq \mathbb{E}(X)-t)\leq \exp\left(-\frac{2t^2}{\sum_{i=1}^n \Delta_i^2}\right).$$
\end{lemma}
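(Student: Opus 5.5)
The statement immediately above is McDiarmid's bounded differences inequality, which is standard. I would prove it with the Doob martingale and the Azuma--Hoeffding argument.

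\emph{Setup.} Define $Z_k=\mathbb{E}[X\mid X_1,\dots,X_k]$ for $k=0,\dots,n$, so that $Z_0=\mathbb{E}X$ and $Z_n=X$. Set $D_k=Z_k-Z_{k-1}$; these are martingale differences with $\mathbb{E}[D_k\mid X_1,\dots,X_{k-1}]=0$.

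\emph{Key step: each increment lies in a short interval.} Fix $X_1,\dots,X_{k-1}$. By independence, $Z_k=g_k(X_1,\dots,X_k)$ where $g_k$ is obtained by integrating $f$ over $X_{k+1},\dots,X_n$ with their product law. The bounded difference hypothesis therefore passes to $g_k$ in its $k$-th argument. Consequently, conditionally on $X_1,\dots,X_{k-1}$, the increment $D_k$ lies in an interval $[a_k,a_k+\Delta_k]$, where
$$a_k=\inf_{x}g_k(X_1,\dots,X_{k-1},x)-Z_{k-1}$$
depends only on $X_1,\dots,X_{k-1}$. This is the one place where independence is essential, and it is what gives the constant $2$ rather than $1/2$ in the exponent: we need an interval of length $\Delta_k$, not merely $|D_k|\le\Delta_k$.

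\emph{Hoeffding's lemma and the Chernoff argument.} A mean-zero random variable supported on an interval of length $\Delta$ satisfies $\mathbb{E}e^{sD}\le e^{s^2\Delta^2/8}$ for every real $s$. This follows from convexity, which bounds $e^{sD}$ by a chord, followed by a second-order Taylor bound on the log of the resulting two-point moment generating function. Applying this conditionally for each $k$ and iterating the tower property gives, for $s>0$,
$$\mathbb{E}\,e^{-s(X-\mathbb{E}X)}\le \exp\Bigl(\frac{s^2}{8}\sum_{k=1}^n\Delta_k^2\Bigr).$$
Markov's inequality then yields
$$\mathbb{P}(X\le\mathbb{E}X-t)\le \exp\Bigl(-st+\frac{s^2}{8}\sum_{k=1}^n\Delta_k^2\Bigr).$$
Choosing $s=4t/\sum_{k}\Delta_k^2$ gives exactly the bound in the statement of Lemma~\ref{lemma:mcdiarmid}.

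\emph{Main obstacle.} The only delicate points are stating the conditional interval bound precisely, where the measurability of $a_k$ is routine, and proving Hoeffding's lemma. Everything else is a direct computation.
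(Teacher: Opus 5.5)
The paper gives no proof of this lemma; it quotes McDiarmid's inequality as a standard tool and only applies it, to $|T^*|$ and $|U^*|$ in the first claim inside the proof of Lemma~\ref{lemma:main}. Your argument is the standard proof and it is correct. The Doob martingale increments lie in intervals of length $\Delta_k$ that are determined by $X_1,\dots,X_{k-1}$, so conditional Hoeffding gives $\mathbb{E}[e^{-sD_k}\mid X_1,\dots,X_{k-1}]\le e^{s^2\Delta_k^2/8}$. The choice $s=4t/\sum_k\Delta_k^2$ then produces exactly the exponent $-2t^2/\sum_k\Delta_k^2$. You are also right that the interval bound, rather than $|D_k|\le\Delta_k$, is what yields the constant $2$.

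Two small remarks. First, your choice $s>0$ uses $t>0$, and this hypothesis is genuinely implicit in the statement, since for $t<0$ the bound can fail. For example, take $n=1$, $f(x)=x$, $X_1$ uniform on $\{0,1\}$ and $t=-1$: the left side is $1$ and the right side is $e^{-2}$. The case $t=0$ is trivial. Second, you can avoid the measurability of $a_k$ altogether. Let $\mu_k$ denote the law of $X_k$. By independence, $\mathbb{E}[e^{-sD_k}\mid X_1=x_1,\dots,X_{k-1}=x_{k-1}]=\int e^{-s(g_k(x_1,\dots,x_{k-1},x)-g_{k-1}(x_1,\dots,x_{k-1}))}\,d\mu_k(x)$. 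For each fixed prefix, $x\mapsto g_k(x_1,\dots,x_{k-1},x)-g_{k-1}(x_1,\dots,x_{k-1})$ has $\mu_k$-mean zero and oscillation at most $\Delta_k$, so the unconditional Hoeffding lemma applies directly. In the paper's application the variables are finitely many indicators of events $\{z\in V\}$, so such issues do not arise there anyway.
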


Finally, we present a result about the concentration of sums of random variables with few dependencies. Similar results in a more general setting are discussed by Janson \cite{janson}.

\begin{lemma}\label{lemma:grid}
Let $m\leq n$, and let $X_{a,b}$, $(a,b)\in [m]\times [n]$, be Bernoulli random variables with probability~$p$ such that~$X_{a,b}$ is mutually independent from $X_{a',b'}$ if $a\neq a'$ and $b\neq b'$. Let $X=\sum_{(a,b)\in [m]\times [n]}X_{a,b}$, and let $\mu=\mathbb{E}(X)$. Then
$$\mathbb{P}(X\leq \mu/2)\leq n\exp(-\mu/(8n)).$$
\end{lemma}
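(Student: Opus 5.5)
We split the $m\times n$ grid into $n$ "generalized diagonals", each of which consists of mutually independent variables. For $k\in\{0,1,\dots,n-1\}$ let
$$D_k=\{(a,b)\in[m]\times[n]: b\equiv a+k \pmod n\}.$$
Since $m\leq n$, each row $a$ contains exactly one element of $D_k$, so $|D_k|=m$. Two distinct elements $(a,b),(a',b')\in D_k$ have $a\neq a'$. They also have $b\neq b'$, because $b-b'\equiv a-a'\not\equiv 0 \pmod n$, using $0<|a-a'|<m\leq n$. Hence by assumption the variables $\{X_{a,b}:(a,b)\in D_k\}$ are independent. The sets $D_0,\dots,D_{n-1}$ partition $[m]\times[n]$. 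With $Y_k=\sum_{(a,b)\in D_k}X_{a,b}$ we therefore have $X=\sum_k Y_k$, and each $Y_k$ is a sum of $m$ independent Bernoulli($p$) variables with mean $\mu_k=mp=\mu/n$.

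Next we apply \Cref{lemma:chernoff} to each $Y_k$: $\mathbb{P}(Y_k<\mu_k/2)\leq \exp(-\mu_k/8)=\exp(-\mu/(8n))$. A union bound over the $n$ diagonals shows that, with probability at least $1-n\exp(-\mu/(8n))$, every $Y_k\geq \mu/(2n)$, and then $X\geq \mu/2$.

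This gives the bound for the event $X<\mu/2$. For the stated event $X\leq\mu/2$, equality needs care. The only case it matters is when every $Y_k$ equals exactly $\mu/(2n)$. If we want to be careful, we can invoke the stronger form $\mathbb{P}(Y_k\leq \mu_k/2)\le\exp(-\mu_k/8)$, which follows from the same Chernoff computation since the standard proof bounds $\mathbb{P}(Y\leq (1-\delta)\mu)$. Alternatively, we can note that the paper's convention treats these interchangeably.

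The only real step is constructing the partition into independent classes, and the mod-$n$ diagonals handle it because $m\leq n$. Everything else is the routine Chernoff bound plus a union bound.
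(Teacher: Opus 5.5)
Your proof is correct and follows the paper's argument: partition $[m]\times[n]$ into $n$ classes of size $m$ in which no two cells share a row or column, apply the lower-tail Chernoff bound to each class, and take a union bound. Your cyclic diagonals $D_k$ explicitly construct the partition that the paper only asserts exists. You also rightly note that the moment-generating-function proof of Chernoff gives the non-strict bound $\mathbb{P}(Y_k\le \mu_k/2)\le \exp(-\mu_k/8)$, which fills a small gap the paper glosses over: it uses $\le$ here, while \Cref{lemma:chernoff} is stated with $<$.
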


\begin{proof}
    We can partition $[m]\times [n]$ into $n$ sets $I_1,\dots,I_n$, each of size $m$, such that no two members of $I_i$ have the same coordinate for every $i\in [n]$. Then
    $$X_i=\sum_{(a,b)\in I_i}X_{a,b}$$
    is the sum of independent indicator random variables with expectation $pm$. Hence, by the multiplicative Chernoff bound,
    $$\mathbb{P}(X_i\leq pm/2)\leq \exp(-pm/8).$$
    But then
    $$\mathbb{P}(X\leq \mu/2)\leq \mathbb{P}(\cup_{i=1}^n\{X_i\le pm/2\}) \le \sum_{i=1}^n\mathbb{P}(X_i\leq pm/2)\leq n\exp(-pm/8)=n\exp(-\mu/(8n)),$$
    which completes the proof.
\end{proof}

\subsection{Edge expansion and multicommodity flows}\label{sect:flow}

Let $G$ be a graph. Given a subset of vertices $U\subseteq V(G)$, we denote by $U^c$ the complement of $U$, that is, $U^c=V(G)\setminus U$. The \emph{edge-boundary} of $U$ is the set of edges with exactly one endpoint in $U$, formally,
$$\partial_G(U):=\partial(U)=\{xy\in E(G): x\in U, y\in U^c\}.$$
The \emph{edge-expansion} of $G$, also known as the \emph{Cheeger constant} of $G$, is 
$$h(G):=\min\left\{ \frac{|\partial(U)|}{|U|}: U\subseteq V(G),\; 1\le |U|\leq \frac{1}{2}|V(G)|\right\}.$$
The edge-expansion of a graph can be lower bounded with the help of network flows. The fundamentals of this theory are established by Leighton and Rao \cite{LRao}. Nevertheless, we present the necessary notions and results in a self-contained manner. 

For $x,y\in V(G)$, let $\cP_{x,y}$ denote the set of paths in $G$ with endpoints $x$ and $y$, and let $\cP=\bigcup_{x,y\in V(G)}\cP_{x,y}$. A function $\varphi:\cP\rightarrow \mathbb{R}_{\geq 0}$ is an \emph{all-pair unit-demand multicommodity flow}, which we abbreviate as \emph{A-flow}, if for every pair of distinct $x,y\in V(G)$,
$$\sum_{P\in \cP_{x,y}}\varphi(P)=1.$$
%for every $x,y\in V(G)$, $x\neq y$. 
The \emph{congestion} of an edge $e\in E(G)$ with respect to $\varphi$ is
$$\cng_{\varphi}(e)=\sum_{P\in \cP: e\in P}\varphi(P).$$
Moreover, the \emph{congestion} of $\varphi$ is the maximum congestion of an edge, that is, 
$$\cng(\varphi)=\max_{e\in E(G)}\cng_{\varphi}(e).$$
Finally, the \emph{congestion} of $G$ is the minimum congestion over every A-flow $\varphi$ on $G$, that is,
$$\cng(G)=\min_{\varphi} \cng(\varphi).$$

\begin{theorem}\label{thm:congestion}
Let $\varphi$ be an A-flow on an $n$-vertex graph $G$. Then
$$h(G)\geq \frac{n}{2\cng(\varphi)}.$$
\end{theorem}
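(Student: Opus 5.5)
The plan is a standard double-counting argument comparing the flow demand that must cross a cut with the capacity the cut edges can carry. Fix an A-flow $\varphi$ on $G$ and a set $U\subseteq V(G)$ with $1\le |U|\le n/2$. The goal is to show $|\partial(U)|\ge |U|\,n/(2\cng(\varphi))$, equivalently $|\partial(U)|\cdot\cng(\varphi)\ge |U|\cdot n/2$.

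\textbf{Step 1 (demand across the cut).} Consider the pairs $(x,y)$ with $x\in U$ and $y\in U^c$; there are $|U|\,|U^c|$ of them. For each such pair, every path $P\in\cP_{x,y}$ starts in $U$ and ends in $U^c$, so it contains at least one edge of $\partial(U)$. Since $\sum_{P\in\cP_{x,y}}\varphi(P)=1$, the total flow carried by such paths, each counted once, is exactly $|U|\,|U^c|$.

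\textbf{Step 2 (capacity of the cut).} Summing congestion over the boundary edges gives
\begin{equation*}
\sum_{e\in\partial(U)}\cng_\varphi(e)=\sum_{P\in\cP}\varphi(P)\,|P\cap\partial(U)|\ge \sum_{x\in U,\,y\in U^c}\ \sum_{P\in\cP_{x,y}}\varphi(P)= |U|\,|U^c|.
\end{equation*}
The inequality holds because we drop nonnegative terms and use $|P\cap\partial(U)|\ge1$ for every crossing path. The left-hand side is at most $|\partial(U)|\cdot\cng(\varphi)$.

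\textbf{Step 3 (conclusion).} Since $|U|\le n/2$, we have $|U^c|\ge n/2$, and therefore $|\partial(U)|\ge |U|\,n/(2\cng(\varphi))$. Taking the minimum over all admissible $U$ gives the stated bound on $h(G)$.

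The only point needing care is the bookkeeping of unordered versus ordered pairs. Here $\cP_{x,y}$ is a set of paths with endpoints $x,y$, so it is symmetric in $x$ and $y$. Each unordered pair $\{x,y\}$ with $x\in U$, $y\in U^c$ is counted exactly once in Step 1, which fixes the normalization. Even if one interprets the flow as being over ordered pairs, counting only one orientation still gives the inequality, so the factor $2$ in the statement is safe. I expect no real obstacle here; the argument is the easy direction of the Leighton--Rao flow-cut relation.
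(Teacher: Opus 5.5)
Your proposal is correct and follows essentially the same route as the paper. Both double-count the total flow between $U\times U^c$ (which equals $|U|(n-|U|)$), bound it by $\sum_{e\in\partial(U)}\cng_{\varphi}(e)\le|\partial(U)|\cng(\varphi)$ because every crossing path uses a boundary edge, and finish with $n-|U|\ge n/2$.
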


\begin{proof}
 Let $U\subseteq V(G)$ be such that $|U|\leq n/2$. We calculate the total weight of paths, whose endpoints are separated by $U$. First of all, we have
\begin{equation}\label{equ:cong1}
    \sum_{(x,y)\in U\times U^c}\sum_{P\in \cP_{x,y}}\varphi(P)=|U|(n-|U|).
\end{equation}
On the other hand, note that every path $P\in \cP_{x,y}$, where $(x,y)\in U\times U^c$, contains at least one edge of $\partial(U)$. Therefore,
\begin{equation}\label{equ:cong2}
\sum_{(x,y)\in U\times U^c}\sum_{P\in \cP_{x,y}}\varphi(P)\leq \sum_{e\in \partial(U)}\cng_{\varphi}(e)\leq |\partial(U)|\cng(\varphi).
\end{equation}
Comparing (\ref{equ:cong1}) and (\ref{equ:cong2}), we get
$$\frac{|\partial(U)|}{|U|}\geq \frac{n-|U|}{\cng(\varphi)}\geq \frac{n}{2\cng(\varphi)}.$$
As this holds for every $U$,  we have $h(G)\geq n/(2\cng(\varphi))$.
\end{proof}

 We prove via a simple concentration argument that we can efficiently simulate an A-flow by picking a single path between any two vertices.

\begin{lemma}\label{lemma:path-system}
Let~$G$ be a graph on~$n$ vertices and let~$\varphi$ be an A-flow such that $\cng(\varphi)\leq C$ for some~$C>6\log n$. Then for every $x,y\in G$ one can select $P_{x,y}\in \cP_{x,y}$ such that every edge of $G$ is contained in at most~$2C$ of the paths $\{P_{x,y}\}_{x,y\in V(G)}$.
\end{lemma}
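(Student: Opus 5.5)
We will use randomized rounding: for each unordered pair $\{x,y\}$ of distinct vertices, independently pick one path $P_{x,y}\in\cP_{x,y}$, with $\mathbb{P}(P_{x,y}=P)=\varphi(P)$. This is a valid probability distribution because $\varphi\ge 0$ and $\sum_{P\in\cP_{x,y}}\varphi(P)=1$. If $\varphi$ is defined on ordered pairs, we treat the ordered pairs $(x,y)$ and $(y,x)$ separately. This only makes the bound easier, since every expectation below is then bounded by the congestion anyway. The plan is to show that with positive probability no edge lies on more than $2C$ of the selected paths; some selection with this property then exists.

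Fix an edge $e\in E(G)$. For each pair $\{x,y\}$, let $X_{x,y}$ be the indicator that $e\in P_{x,y}$. These indicators are independent across pairs, and $\mathbb{E}X_{x,y}=\sum_{P\in\cP_{x,y}:e\in P}\varphi(P)$. Hence the load $X_e=\sum X_{x,y}$ satisfies $\mu=\mathbb{E}X_e=\cng_\varphi(e)\le C$. We now need a tail bound with a threshold independent of $\mu$. Since $\mu\le C<2C$, we apply the multiplicative Chernoff bound (\Cref{lemma:chernoff}) in its ``in particular'' form with $t=2C>2\mu$ if $\mu<C$. If $\mu=C$ exactly, we instead use the general form with $\delta=1$, which gives $\exp(-\mu/3)=\exp(-C/3)$. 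In either case,
$$\mathbb{P}(X_e>2C)\le \exp(-2C/3)\quad\text{or}\quad \exp(-C/3).$$
To be safe we use the uniform bound $\mathbb{P}(X_e>2C)\le\exp(-C/3)$, which follows from the $\delta$-form with $(1+\delta)\mu=2C$, $\delta\ge1$, since $\delta^2\mu/(2+\delta)\ge \delta\mu/3\ge C/3$.

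Finally we take a union bound over the at most $\binom n2<n^2/2$ edges:
$$\mathbb{P}(\exists e: X_e>2C)<\tfrac{n^2}{2}e^{-C/3}<\tfrac{n^2}{2}e^{-2\log n}=\tfrac12<1,$$
using $C>6\log n$ (natural logarithm). So some choice of paths has every edge in at most $2C$ of them. The only delicate point is handling edges whose congestion is well below $C$, so that the threshold $2C$ rather than $2\mu$ is what enters the Chernoff bound. This is exactly what the inequality $\delta^2\mu/(2+\delta)\ge \delta\mu/3$ for $\delta\ge1$ handles, and it yields the exponent $C/3$ uniformly.
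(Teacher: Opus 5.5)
Your proof is correct and follows the paper's argument: you round each pair's path independently according to $\varphi$, apply the multiplicative Chernoff bound to get $\mathbb{P}(X_e>2C)\le e^{-C/3}<n^{-2}$, and take a union bound over fewer than $n^2/2$ edges. You also justify that the threshold $2C$, rather than $2\mathbb{E}X_e$, still gives exponent $C/3$ when $\mathbb{E}X_e<C$, a step the paper leaves implicit; the case $\mathbb{E}X_e=0$ is trivial.
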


\begin{proof}
For every $x,y\in V(G)$, $\varphi$ is a probability distribution on $\cP_{x,y}$. Pick $P_{x,y}$ randomly from $\cP_{x,y}$ with respect to $\varphi$, independently from other pairs $(x',y')$. For an edge $e\in E(G)$, let $X_e$ denote the number of paths $P_{x,y}$ containing $e$, then
$$\mathbb{E}(X_e)=\cng_{\varphi}(e)\leq C.$$
As $X_e$ is the sum of independent indicator random variables, we can apply the multiplicative Chernoff bound (\Cref{lemma:chernoff}) to get
$$\mathbb{P}(X_e>2C)\leq e^{-C/3}<\frac{1}{n^2}.$$
Hence, by the union bound, we have 
$$\mathbb{P}(\forall e\in E(G): X_e<2C)>0,$$
which means that there is a choice of the paths $P_{x,y}$ such that every edge is contained in at most $2C$ of these paths.
\end{proof}

\subsection{The hypercube and 0/1-polytopes}

The \emph{Hamming-distance} of $x,y\in \{0,1\}^n$, denoted by $\dist_H(x,y)$, is the number of indices $i\in [n]$ such that $x_i\neq y_i$. The \emph{Hamming-sphere} and \emph{Hamming-ball} of radius $d$ are  
$$S_d(x)=\Big\{y\in \{0,1\}^n:\dist_H(x,y)=d \Big\}\mbox{\ \ \ \ and \ \ \ }B_d(x)= \Big\{y\in \{0,1\}^n:\dist_H(x,y)\leq d \Big\},$$
and we write $S_d:=S_d(0)$ and $B_d:=B_d(0)$ for simplicity. Define $x\oplus y\in \{0,1\}^n$ as $(x\oplus y)_i=x_i+y_i \pmod{2}$ for each $i\in [n]$. The \emph{support} of~$x$ is the set of indices $i\in [n]$ with $x_i=1$, denoted by $\supp(x)$.

The $n$-dimensional hypercube graph $Q_n$ is the graph on vertex set $\{0,1\}^n$, where two vertices are joined by an edge if their Hamming-distance is 1. Given $x,y\in \{0,1\}^n$, let~
$$\cube(x,y)=\Big\{z\in \{0,1\}^n: \forall i\in [n], \min(x_i,y_i)\leq z_i\leq \max(x_i,y_i)\Big\}.$$
Then $\cube(x,y)$ is the smallest subcube of $\{0,1\}^n$ that contains both $x$ and $y$, and it is isomorphic to a $\dist_H(x,y)$-dimensional hypercube. 

Let $V \subseteq \{0,1\}^n$ and $P = \conv(V)$. A subset $F \subseteq P$ is a face of $P$ if it is the intersection of $P$ with a supporting hyperplane; equivalently, there exists a linear inequality $a^T x \le b$ that is valid for all $x \in P$ and $F = \{x \in P : a^T x = b\}$. The vertices of $F$ are precisely $V \cap F$. The dimension of a face $F$, denoted by $\dim(F)$, is defined as the dimension of its affine hull. Faces of dimension 0 are the vertices of $P$, while faces of dimension 1 are its edges. The graph (or 1-skeleton) of $P$ is the graph $G = (V, E)$ where~$V$ is the set of vertices and~$E$ is the set of edges of~$P$. In this context, two vertices $u, v \in V$ form an edge in~$G$ if and only if the line segment $[u, v]$ is a 1-dimensional face of~$P$. This is equivalent to the existence of a linear inequality $a^T x \le b$ such that $a^T u = a^T v = b$ and $a^T w < b$ for all $w \in V \setminus \{u, v\}$.

The following simple observations provide sufficient conditions for determining whether two vertices in a 0/1-polytope are adjacent or non-adjacent. While these represent special subcases of conditions established in \cite{FKSS}, they are sufficient for the proofs presented in this paper.

\begin{lemma}\label{lemma:edge}
Let $V\subseteq \{0,1\}^n$, and let $u,v\in V$ such that $\cube(u,v)\cap V=\{u,v\}$. Then $uv$ is an edge of the graph of $\conv(V)$.
\end{lemma}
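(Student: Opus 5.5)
We need a linear functional $a$ with $a^Tu=a^Tv=b$ and $a^Tw<b$ for every $w\in V\setminus\{u,v\}$. Let $D=\{i: u_i\neq v_i\}$ and $S=[n]\setminus D$, the coordinates on which $u$ and $v$ agree. Note $\cube(u,v)=\{z: z_i=u_i \text{ for all } i\in S\}$. First I will separate everything outside this subcube. For $i\in S$ put $c_i=1$ if $u_i=1$ and $c_i=-1$ if $u_i=0$, and let $f(x)=\sum_{i\in S}c_ix_i$. Then $f$ attains its maximum $m=|\{i\in S: u_i=1\}|$ over $\{0,1\}^n$ exactly on $\cube(u,v)$. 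For any $w\in\{0,1\}^n\setminus\cube(u,v)$ we get $f(w)\le m-1$, by integrality.

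Next I separate $u,v$ from the rest of the subcube. Inside $\cube(u,v)$, the points $u$ and $v$ are antipodal. For $i\in D$ put $g_i=1$ if $u_i=1$ and $g_i=-1$ otherwise, and let $\ell(x)=\sum_{i\in D}g_ix_i$. Then $\ell$ is maximized at $u$ and minimized at $v$, which is the wrong kind of functional: we want one that is equal on $u,v$ and larger there than at the other subcube points. So I will use a quadratic-style linear trick instead. On the subcube, reparametrize by $y_i=|x_i-u_i|$ for $i\in D$, so that $u\leftrightarrow y=0$ and $v\leftrightarrow y=\mathbf{1}$. Since no linear functional can separate both endpoints of a cube diagonal from all other cube vertices in general, I will instead use that, by hypothesis, the other subcube points are absent from $V$. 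Then only $u$ and $v$ in the subcube need to be handled. So take $a^Tx=M f(x)$ for a large $M$. This gives $a^Tu=a^Tv=Mm$, and $a^Tw\le M(m-1)<Mm$ for every $w\in V\setminus\{u,v\}$, because every such $w$ lies outside $\cube(u,v)$.

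The face $F=\{x\in\conv(V): f(x)=m\}$ is therefore $\conv(u,v)$. This is a segment, and it is 1-dimensional since $u\neq v$. Hence $uv$ is an edge of the graph of $\conv(V)$. There is no real obstacle. The only point to check is that the vertices of the face are exactly $V\cap F=\{u,v\}$, which follows from the hypothesis $\cube(u,v)\cap V=\{u,v\}$ together with the strict inequality off the subcube. The detour with $\ell$ turns out to be unnecessary; the functional $f$ alone suffices.
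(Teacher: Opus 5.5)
Your proof is correct and is essentially the paper's argument. After translating $u$ to $0$, your functional $f$ is exactly the negative of the paper's $a^Tx=\sum_{i:\,v_i=0}x_i$: it is extremal precisely on $\cube(u,v)$ and off the cube it is strictly worse, by an integer amount. The middle paragraph about $\ell$ and the reparametrization $y_i$ is a dead end and can simply be deleted.
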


\begin{proof}
By the symmetry of the hypercube, we may assume that $u=0$. Let $a\in\mathbb{R}^n$ be the vector defined as $a_i=1-v_i$ for $i\in [n]$. Then $a^Tu=a^Tv=0$. However,  if $w\not\in \cube(u,v)$, then there exists some $i$ such that $w_i=1$ and $v_i=0$, which means that $a^Tw>0$. Therefore, as $\cube(u,v)\cap V=\{u,v\}$, we have~$a^Tw>0$ for every $V\setminus \{u,v\}$. Thus, $\{u,v\}$ is an edge of the graph of $\conv(V)$.
\end{proof}

\begin{lemma}\label{lemma:non-edge}
Let $V\subseteq \{0,1\}^n$, $u,v\in V$, and let $G$ be the graph of $\conv(V)$. 
\begin{enumerate}[(a)]
    \item\label{eq:lemma:non-edgea} If $S_1(u)\subseteq V\setminus\{v\}$, then $uv\not\in E(G)$.

    \item\label{eq:lemma:non-edgeb} If there exist $s,t\in \cube(u,v)\setminus \{u,v\}$ such that $s\oplus t=u\oplus v$,  then $uv\not\in E(G)$.
\end{enumerate}
\end{lemma}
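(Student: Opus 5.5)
Both parts follow the same strategy. We exhibit a point of $\conv(V)$ that lies on the segment $[u,v]$, namely its midpoint $\frac12(u+v)$, and show that it is also a convex combination of vertices of $V$ other than $u$ and $v$. Suppose $uv$ were an edge, with defining inequality $a^Tx\le b$, so that $a^Tu=a^Tv=b$ and $a^Tw<b$ for every $w\in V\setminus\{u,v\}$. Then $a^T\big(\frac12(u+v)\big)=b$. On the other hand, a convex combination of points of $V\setminus\{u,v\}$ with positive total weight on them has $a$-value strictly less than $b$. This is a contradiction, so $uv\notin E(G)$.

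For part (b), I would first use the symmetry $x\mapsto x\oplus u$ of the hypercube to assume $u=0$. This map is an affine isometry of $\{0,1\}^n$ that preserves convex hulls and faces, and it maps cubes to cubes and preserves $\oplus$-relations. Then $\cube(u,v)=\{z:\supp(z)\subseteq\supp(v)\}$. For $s,t$ in this cube with $s\oplus t=v$, the supports of $s$ and $t$ are disjoint and their union is $\supp(v)$, so $s+t=v$ as real vectors. Since $s,t\neq u,v$, we get $\frac12(u+v)=\frac12 v=\frac12(s+t)$, a convex combination of $s,t\in V\setminus\{u,v\}$. The argument above then applies. Here I read the hypothesis as requiring $s,t\in V$, which is necessary for the statement to make sense.

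For part (a), again assume $u=0$, so $S_1(u)=\{e_1,\dots,e_n\}\subseteq V\setminus\{v\}$. Let $k=|\supp(v)|$. If $k=1$ then $v\in S_1(u)$, contradicting $v\notin S_1(u)\subseteq V\setminus\{v\}$; hence $k\ge 2$. The midpoint $\frac12 v$ is not obviously a combination of the $e_i$, so instead I would use the point $\frac1k v=\frac1k\sum_{i\in\supp(v)}e_i$. This lies on the segment $[u,v]$ since $\frac1k v=(1-\frac1k)u+\frac1k v$, and it is a convex combination of $e_i\in V\setminus\{u,v\}$. For the defining functional of a putative edge we would have $a^T(\frac1k v)=b$ but $\frac1k\sum a^Te_i<b$, again a contradiction.

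There is no real obstacle. The only points needing care are checking that the translation by $u$ is legitimate (it is the affine map $x\mapsto x+u-2\,\mathrm{diag}(u)x$) and excluding the degenerate case $k=1$ in part (a).
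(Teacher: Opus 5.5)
Your proof is correct and is essentially the paper's argument. After translating to $u=0$, the paper applies the supporting functional directly to the identities $v=\sum_{i\in\supp(v)}e_i$ and $v=s+t$; your convex-combination contradiction is the same computation rescaled by $1/k$ or $1/2$. You are also right that part (b) needs $s,t\in V$: the paper's proof, and its use of the lemma in \Cref{lemma:upperbound}, rely on this implicitly.
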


\begin{proof}
By the symmetry of the hypercube, we may assume that $u=0$ without loss of generality. Assume that $uv$ is an edge of $G$, then there exists a vector $a$ such that $a^Tv=0$ and $a^Tz>0$ for every $z\in V\setminus \{u,v\}$.

\eqref{eq:lemma:non-edgea} We can write $s_1+\dots+s_d=v$ with suitable $s_1,\dots,s_d\in S_1$. But if $s_1,\dots,s_d\in V\setminus \{v\}$, then
$$0<a^Ts_1+\dots+a^Ts_d=a^Tv=0,$$
a contradiction.

\eqref{eq:lemma:non-edgeb}  By the assumption $u=0$, for every $s,t\in \cube(0,v)$,  $s\oplus t=v$ implies $s+t=v$.  But this is impossible as otherwise $$0=\langle a,v\rangle=\langle a,s\rangle+\langle a,t\rangle>0,$$
a contradiction.
\end{proof}

Next, we prove the upper bounds of \Cref{thm:main}, which are simple corollaries of \Cref{lemma:non-edge}.

\begin{lemma}\label{lemma:upperbound}
Let $V$ be formed by sampling each vertex of $\{0,1\}^n$ independently with probability $p=p(n)$, and let $G$ be the graph of $\conv(V)$. Then w.h.p.:
\begin{enumerate}[(a)]
    \item \label{eq:lemma:upperbounda} $G$ contains a vertex of degree $n$ if $p\in (1/2+\eps,1]$;
    \item \label{eq:lemma:upperboundb} The average degree of $G$ is at most $n^{O(\log \log n+\log (1/p))}$.
\end{enumerate}
\end{lemma}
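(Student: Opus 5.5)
For part (a) the plan is to find a vertex $u\in V$ with $S_1(u)\subseteq V$. By \Cref{lemma:non-edge}(a), such a $u$ has no neighbour in $G$ outside $S_1(u)$. Every $v\in S_1(u)$ satisfies $\cube(u,v)\cap V=\{u,v\}$, so by \Cref{lemma:edge} it is a neighbour of $u$. Hence $\deg_G(u)=n$.

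To show such a $u$ exists w.h.p., I fix $\lceil n/2\rceil$ vertices $u_1,u_2,\dots$ of even Hamming weight at pairwise distance at least $3$; a binary code of distance $3$ with exponentially many words will do. For distinct $u_i$ the closed neighbourhoods $\{u_i\}\cup S_1(u_i)$ are disjoint. Each event $\{u_i\}\cup S_1(u_i)\subseteq V$ has probability $p^{n+1}$, and these events are independent. The probability that none occurs is therefore $(1-p^{n+1})^{M}\le \exp(-Mp^{n+1})$, where $M$ is the number of codewords. With $M\ge 2^n/(n+1)$ (a Hamming-type code) and $p>1/2+\eps$, we get $Mp^{n+1}\ge (2p)^n p/(n+1)\to\infty$. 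So this probability tends to $0$.

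For part (b), fix a threshold $d_0=\lceil C(\log\log n+\log(1/p))\rceil$. The plan is to bound the expected number of edges $uv$ of $G$ with $\dist_H(u,v)=d>d_0$ using \Cref{lemma:non-edge}(b). Fix $u,v$ at distance $d$. The set $\cube(u,v)\setminus\{u,v\}$ splits into $(2^d-2)/2$ complementary pairs $\{s,t\}$ with $s\oplus t=u\oplus v$, and these pairs are disjoint. If $uv\in E(G)$, then no pair is fully in $V$. Conditioned on $u,v\in V$, this has probability $(1-p^2)^{2^{d-1}-1}$.

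The expected number of such long edges is then at most
$$\sum_{d>d_0}2^n\binom{n}{d}p^2(1-p^2)^{2^{d-1}-1}\le 2^n\sum_{d>d_0} n^d\exp(-p^22^{d-1}/2).$$
For $d>d_0$ with $C$ large we have $p^22^{d-1}\ge 4d\log n$. Indeed $2^{d}\ge (\log n)^{C}p^{-C}$, which beats $p^{-2}\cdot 8d\log n$ once $d\le n$ is accounted for; $d$ appears on both sides only through $2^d$ versus $d$, which is harmless. Each term is then at most $n^{-d}$, so the sum is $o(2^n p)$, and Markov's inequality gives $o(|V|)$ long edges w.h.p.

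The short edges number at most $|V|\sum_{d\le d_0}\binom nd\le |V| n^{d_0}$ deterministically. Since $|V|\ge 2^np/2$ w.h.p. by Chernoff, the average degree is at most $2n^{d_0}+o(1)=n^{O(\log\log n+\log(1/p))}$.

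The main obstacle is choosing the constant $C$ so that the exponent $p^2 2^{d-1}$ dominates $d\log n$ uniformly over all $d>d_0$. Small $p$ means we need $2^{d}\gg p^{-2}\log n$; this forces the $\log(1/p)$ term and is the only delicate point. If $p$ is so small that $|V|$ concentrates poorly, I would note that $p\ge n^{-0.05}$ in the relevant range, or else handle it via the trivial bound. For $p$ near $1$ the first part subsumes this.
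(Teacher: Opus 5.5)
Your proof is correct. Part (b) follows the paper's argument: a threshold distance, \Cref{lemma:non-edge}(b) applied to the $2^{d-1}-1$ disjoint complementary pairs of $\cube(u,v)\setminus\{u,v\}$, Markov's inequality for long edges, and a trivial count for short ones. Your bookkeeping is somewhat more careful than the paper's in three places:
\begin{itemize}
\item A large constant $C$ makes $p^2 2^{d-1}\ge 4d\log n$ hold for every $d>d_0$ with no restriction on $p$. The paper's choice $L=2\log_2\log n+2\log_2(1/p)$ only works when $\log n\gg L$.
\item You count short edges per vertex of $V$ and compare long edges with $|V|$ rather than with $2^n$.
\item Your fallback for tiny $p$ is right, and easy to make precise. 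If $p\le 2^{-n/2}$, then $n^{\log(1/p)}\ge 2^n\ge |V|$ and the bound is trivial. Otherwise $2^np\to\infty$, so Chernoff gives $|V|\ge 2^np/2$. Also, $d_0\ge C\log(1/p)$ gives $n^{-d_0}\le p^{C\log n}$, which justifies your claim that the expected number of long edges is $o(2^np)$.
\end{itemize}

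Part (a) takes a different route. The paper runs a second-moment computation on $X=\sum_u I_u$, bounding covariances for pairs at distance $1$ or $2$ and then applying Chebyshev. You restrict to a code of minimum distance $3$. On such a code the balls $B_1(u_i)$ are disjoint, so the events $\{B_1(u_i)\subseteq V\}$ are genuinely independent. This avoids any variance computation and gives a failure probability of $\exp\bigl(-(2p)^n p/\mathrm{poly}(n)\bigr)$, far smaller than Chebyshev's. The price is that you need a large distance-$3$ code, which the paper's method does not.

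There are two slips to fix.
\begin{itemize}
\item Your opening ``$\lceil n/2\rceil$ vertices'' contradicts what you actually use. With only polynomially many codewords, $Mp^{n+1}\to 0$ and the argument fails; you need $M\ge 2^n/\mathrm{poly}(n)$.
\item The bound $M\ge 2^n/(n+1)$ is attained only when $n+1$ is a power of $2$. For general $n$, a shortened Hamming code gives $M>2^n/(2n+2)$: take $n$ distinct nonzero columns in $\mathbb{F}_2^r$ with $r=\lceil\log_2(n+1)\rceil$. Alternatively, a maximal set of vertices at pairwise distance at least $3$ has $M\ge 2^n/|B_2|$, since the radius-$2$ balls around it cover $\{0,1\}^n$. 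Either bound suffices, because $(2p)^n\ge(1+2\eps)^n$ beats any polynomial.
\end{itemize}
The even-weight condition on the codewords is unnecessary.
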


\begin{proof}

\eqref{eq:lemma:upperbounda}  If $B_1(u)\subseteq V$ for some vertex~$u$, then \Cref{lemma:edge} and part~\eqref{eq:lemma:non-edgea} of \Cref{lemma:non-edge} ensures that~$S_1(u)$ is the entire neighborhood of~$u$ in~$G$. It is a standard exercise to show that if $p\geq 1/2+\eps$, then there exists a vertex $u$ satisfying $B_1(u)\subseteq V$ with high probability. Indeed, let $I_u$ be the indicator random variable of the event $B_1(u)\subseteq V$, and let $X=\sum_{u\in \{0,1\}^n}I_u$. Then $\mathbb{E}I_u=p^{n+1}$, and thus $\mathbb{E}X=2^np^{n+1}$. Consider the correlation of the variables $I_u$ and $I_v$. If $u$ and $v$ have Hamming-distance~1 or~2, then $\mbox{Cov}(I_u,I_v)\leq \mathbb{E}I_uI_v=p^{2n}$, otherwise $I_u$ and $I_v$ are independent. Therefore,
$$\mbox{Var}(X)=\sum_{u,v\in \{0,1\}^n}\mbox{Cov}(I_u,I_v)=\sum_{u\in \{0,1\}^n}\mbox{Var}(I_u)+\sum_{\substack{u,v\in \{0,1\}^n:\\ \dist_H(u,v)\in \{1,2\}}}\mbox{Cov}(I_u,I_v)\leq 2^n p^{n+1}+2^nn^2 p^{2n},$$
where we use that $\mbox{Var}(I_u)\leq \mathbb{E}I_u=p^{n+1}$, and there are at most $2^nn^2$ pairs $(u,v)$ of Hamming-distance~1 or~2. By Chebyshev's inequality,
$$\mathbb{P}(X=0)\leq \mathbb{P}(|X-\mathbb{E}X|\geq \mathbb{E}X)\leq \frac{\mbox{Var}(X)}{(\mathbb{E}X)^2}\leq \frac{2^np^{n+1}+2^nn^2p^{2n}}{2^{2n}p^{2n+2}}=\frac{1}{2^np^{n+1}}+\frac{n^2}{2^np^2}.$$
For $p\geq 1/2+\eps$, both terms converge to 0, so w.h.p. $X>0$. 

\eqref{eq:lemma:upperboundb} Let $d$ be a positive integer, and let $u,v\in \{0,1\}^n$ such that $\dist_H(u,v)=d$. Then $\cube(s,t)$ is a $d$-dimensional hypercube. The relation $s\oplus t=u\oplus v$ partitions $\cube(u,v)\setminus \{u,v\}$ into $2^{d-1}-1$ pairs~$(s,t)$. By part~\eqref{eq:lemma:non-edgeb} of \Cref{lemma:non-edge}, if any of these pairs is contained in~$V$, then~$uv$ is not an edge of~$G$. Therefore,
$$\mathbb{P}(uv\in E(G))\leq (1-p^2)^{2^{d-1}-1}\leq \exp(-p^2(2^{d-1}-1)).$$
In case $d\geq 2\log_2\log n+2\log_2 (1/p)=:L$, we have $p^2(2^{d-1}-1)\geq 2d\log n$, so 
$$\mathbb{P}(uv\in E(G))\leq \exp(-2d\log n)=n^{-2d}.$$
Hence, the expected number of edges $uv$ of $G$ such that $\dist_H(u,v)\geq L$ can be bounded as
$$2^n\sum_{d\geq L}\binom{n}{d}n^{-2d}=o(2^n).$$
Hence by Markov's inequality, w.h.p. $G$ has $o(2^n)$ such edges. 
On the other hand, the number of edges~$uv$ of~$G$ with $\dist_H(u,v)\le L$ is at most
%But then w.h.p., the number of edges of $G$ is at most 
$$2^{n}\sum_{d\leq L}\binom{n}{d}\leq 2^n\cdot n^{L}\leq 2^n\cdot n^{2\log_2\log n+2\log_2 (1/p)}.$$
The above arguments imply that w.h.p. the average degree of~$G$ is~$n^{O(\log \log n+\log (1/p))}$.
\end{proof}

Finally, we prove that \Cref{thm:main} indeed implies \Cref{thm:general}. In order to show this, we use the following projection lemma of \cite{FKSS}. Given $x\in \{0,1\}^n$, let $\pi_d(x)\in \{0,1\}^{n-d}$ denote the projection of $x$ onto the first $n-d$ coordinates.

\begin{lemma}[\cite{FKSS}]\label{lemma:projection}
Let $V\subseteq \{0,1\}^n$, and let $G$ be the graph of $\conv(V)$. Let $W=\pi_d(V)$ and let $H$ be the graph of $\conv(W)$. Then for every $A\subseteq V$,
$$|\partial_{G}(A)|\geq |\partial_{H}(\pi_d(A))|.$$
\end{lemma}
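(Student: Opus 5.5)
The idea is to find, for every edge $xy$ of $H$ that crosses $\pi_d(A)$, its own edge of $G$ that crosses $A$ and projects onto $\{x,y\}$. Such an assignment is automatically injective, which gives the inequality. Here $\partial_H(\pi_d(A))$ means the edges of $H$ between $\pi_d(A)$ and $W\setminus\pi_d(A)$, and $\pi:=\pi_d$ throughout.

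\textbf{Step 1: lifting faces.} Let $xy\in\partial_H(\pi(A))$ with $x\in\pi(A)$ and $y\in W\setminus \pi(A)$. Take $a\in\mathbb{R}^{n-d}$ and $b$ with $a^Tw\le b$ for all $w\in W$, and with equality exactly for $w\in\{x,y\}$. Set $a'=(a,0)\in\mathbb{R}^n$. Then $a'^Tz=a^T\pi(z)$ for all $z$, so $a'^Tz\le b$ on $V$, with equality exactly on $V_{xy}:=V\cap\pi^{-1}(\{x,y\})$. Hence $F=\conv(V_{xy})$ is a face of $\conv(V)$, and edges of $F$ are edges of $G$. Both fibers $V_x=V\cap\pi^{-1}(x)$ and $V_y$ are nonempty. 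Since $y\notin\pi(A)$, every vertex of $V_y$ lies in $V\setminus A$. Since $x\in\pi(A)$, there is some $u\in A\cap V_x$.

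\textbf{Step 2: an edge of $F$ from $u$ to $V_y$.} Choose a linear functional $\ell$ on $\mathbb{R}^{n-d}$ with $\ell(x)<\ell(y)$, for instance $\ell(w)=\langle w, x-y\rangle$ with the sign reversed, so that the smaller value is at $x$. Then $\ell\circ\pi$ is minimized on $F$ exactly at $\conv(V_x)$, which is therefore a face $F_x$ of $F$. Now use the standard fact that at a vertex $u$ of a polytope, the polytope is contained in $u$ plus the cone generated by the edge directions at $u$. If every edge of $F$ at $u$ stayed inside $F_x$, then $\ell\circ\pi$ would be non-increasing along all these directions. Then $F\subseteq\{\ell\circ\pi\le \ell(x)\}$, contradicting $V_y\neq\emptyset$. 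So some edge $uv$ of $F$ has $v\in V_y$. Such an edge is an edge of $G$ with $u\in A$ and $v\in V\setminus A$, so $uv\in\partial_G(A)$. Moreover $\{\pi(u),\pi(v)\}=\{x,y\}$.

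\textbf{Step 3: injectivity.} Assign this edge $uv$ to $xy$. Distinct edges of $H$ have distinct projected endpoint pairs, so they receive distinct edges of $G$. Therefore $|\partial_G(A)|\ge|\partial_H(\pi(A))|$.

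The main obstacle is Step 2. A cruder argument only uses that the graph of $F$ is connected. That yields some crossing edge of $F$, but it may lie entirely inside the fiber $V_x$, in which case injectivity fails. Using that $F_x$ is a proper face of $F$ and that edges at a vertex generate its tangent cone forces the crossing edge to go between the two fibers, and this is what makes the assignment injective.
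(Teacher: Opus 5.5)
Your proof is correct and complete. The paper gives no proof of this lemma; it quotes it from Ferber, Krivelevich, Sales and Samotij. So there is no in-paper argument to compare with, and your write-up supplies a self-contained proof.

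The ingredients all check out:
\begin{itemize}
\item The fiber $V\cap\pi^{-1}(\{x,y\})$ over an edge $xy$ of $H$ spans a face $F$ of $\conv(V)$. You get this by padding the supporting functional with zeros, and the paper's characterization of edges gives tightness exactly at $x$ and $y$.
\item Every point of $V$ is a vertex of $\conv(V)$, because 0/1 points are extreme. Hence $u$ is a vertex of $F$, and edges of the face $F$ are edges of $G$.
\item The simplex-type fact (edge directions at a vertex generate its tangent cone) gives, for \emph{every} $u\in A\cap\pi^{-1}(x)$, a $G$-neighbour in $\pi^{-1}(y)\subseteq V\setminus A$.
\end{itemize}

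As you point out, it is this cross-fiber property, not mere connectivity of the graph of $F$, that makes the assignment $xy\mapsto uv$ injective.
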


\begin{proof}[Proof of \Cref{thm:general} assuming \Cref{thm:main}]
Let $c=0.05$, and let $\delta=100n^{-c}$. Let $d=\lfloor \log_2 (\frac{\delta}{p})\rfloor$, and write $\pi=\pi_d$ and $L=2^d$. Then $d\leq 0.91n$ by the assumption $p>2^{-0.9n}$, and $\frac{\delta}{2}\leq p L\leq \delta$. Observe that every $y\in \{0,1\}^{n-d}$ is included in $\pi(V)$ independently with probability 
$$\hat{p}=1-(1-p)^{L}\in (pL-O(\delta^2),pL),$$
so $W\sim V_{n-d,\hat{p}}$. Let $H$ be the graph of the convex hull of $W=\pi(V)$. As 
$$\hat{p}\in (40n^{-c},100n^{-c})\subseteq ((n-d)^{-c},1/3),$$ we can apply \Cref{thm:main}, which implies that w.h.p.  $h(H)=(n-d)^{\Omega(\log (1/\hat{p}))}=n^{\Omega(\log n)}$.

As $\mathbb{E}|V|=p2^n=pL2^{n-d}$, the multiplicative Chernoff bound (\Cref{lemma:chernoff}) implies that w.h.p. 
$$|V|\leq 1.01pL2^{n-d}.$$ 
Similarly, as $\mathbb{E}|W|=\hat{p}2^{n-d}$, the multiplicative Chernoff bound implies that w.h.p. 
$$|W|\geq 0.995 \hat{p} 2^{n-d}\geq 0.99pL2^{n-d}.$$ 
Furthermore, let $X$ be the number of $x\in \{0,1\}^{n-d}$ such that $|\pi^{-1}(x)|\geq n$. Then 
$$\mathbb{E}X\leq 2^{n-d}\cdot p^{n}\binom{L}{n}\leq 2^{n-d} (pL)^{n}\leq 2^{-n},$$
which implies that w.h.p. $X=0$. We show that the conditions $h(H)=n^{\Omega(\log n)}$, $|W|\geq 0.9|V|$, and $X=0$ imply that $h(G)=n^{\Omega(\log n)}$. Then we are done as these conditions are all satisfied w.h.p..

Let $A\subseteq V$ such that $|A|\leq |V|/2$, and let $B=\pi(A)$. Then \Cref{lemma:projection} ensures that
$$|\partial_G(A)|\geq |\partial_{H}(B)|.$$
On the one hand, we have $|B|\leq |A|\leq \frac{1}{2}|V|\leq \frac{2}{3}|W|$. Therefore, if $|B|\geq |W|/2$, then $$|\partial_G(A)|\geq |\partial_{H}(B)|=|\partial_{H}(B^c)|\geq |B^c|h(H)\geq \frac{|W|}{3}h(H)\geq \frac{|A|}{3}h(H).$$
This gives $|\partial_{G}(A)|/|A|\geq \frac{1}{3}h(H)$. On the other hand, if $|B|\leq |W|/2$, then  the assumption $X=0$ implies $|B|\geq |A|/n$. Therefore,
$$|\partial_G(A)|\geq |\partial_{H}(B)|\geq |B|h(H)\geq \frac{|A|}{n}h(H).$$
Hence, $|\partial_G(A)|/|A|\geq \frac{1}{n}h(H)$. Thus, 
$$h(G)\geq \frac{1}{n}h(H)=n^{\Omega(\log n)},$$
which completes the proof.
\end{proof}

\section{Edge-expansion of random polytopes}\label{sect:main}
In this section, we prove the lower bounds of \Cref{thm:main}. We start by introducing some further notation, which is used extensively throughout this section.     Let $d,n$ be positive integers and $V\subseteq \{0,1\}^n$.
\begin{itemize}
\item $Q_n^d$ is the Hamming-distance $d$ graph. Formally, the vertex set of $Q_n^d$ is $\{0,1\}^n$, and~$x$ and~$y$ are adjacent if $\dist_H(x,y)=d$.
\vspace{-5pt}
\item $G_d(V)$ is  the graph with vertex set $V$ in which $x,y\in V$ are adjacent if 
$$\dist_H(x,y)=d\mbox{ and }\cube(x,y)\cap V=\{x,y\}.$$
\vspace{-20pt}
\item $N_{d,V}(x)$ is the set of neighbours of $x\in V$ in $G_{d}(V)$.
\end{itemize} 

By \Cref{lemma:edge}, $G_d(V)$ is a subgraph of the graph~$G$ of $\conv(V)$. Therefore, in order to show that~$G$ has large edge-expansion, it suffices  to bound the expansion of $G_d(V)$ for some $d$.  Following the framework of \Cref{thm:congestion}, we bound $h(G_d(V))$ by constructing an A-flow in $G_d(V)$ with small maximum congestion. Observe that $G_d(V)$ is a subgraph of $Q_n^d$ as well. The latter is highly symmetric, which is a property we can exploit to construct low-congestion A-flows. We show that an ideal A-flow in $Q_n^d$ can be rerouted into a valid A-flow in $G_d(V)$ via local modifications, ensuring that the congestion remains within a constant factor of the original. Note that since $Q_n^d$ is disconnected for even $d$, we restrict our analysis to odd $d$ in certain cases.

In the rest of this section, we also use the following shorthand:
\[N=N_n=2^n \quad \text{ and }\quad D=D_{n,d}=\binom{n}{d}.\] 
Note that~$Q_n^d$ is an $N$-vertex $D$-regular graph. First, we  show that~$Q_n^d$ has an A-flow with congestion at most $nN/D$.

\begin{lemma}\label{lemma:congQnd}
If $d$ is odd and $d<n$, then $\cng(Q_n^d)\leq \frac{nN}{D}.$
\end{lemma}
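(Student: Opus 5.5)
We will construct an explicit A-flow on $Q_n^d$ with congestion at most $nN/D$, exploiting the symmetry of the hypercube. The plan is to first route a unit flow between every pair $x,y$ with $\dist_H(x,y)=1$ using few edges of $Q_n^d$, and then route a general pair $x,y$ along a canonical shortest path in $Q_n$ from $x$ to $y$, replacing each hypercube edge by the corresponding gadget flow in $Q_n^d$. Since $d$ is odd and $d<n$, every hypercube edge $\{z,z\oplus e_i\}$ can be realised by a path of length $3$ in $Q_n^d$: pick a $d$-set $S\ni i$ and a $d$-set $T$, and step $z\to z\oplus e_S\to z\oplus e_S\oplus e_T\to z\oplus e_i$ with $e_T\oplus e_S\oplus$ ... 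Rather than a fixed length, it is cleaner to use the following averaging gadget.

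\textbf{Gadget.} For each hypercube edge $(z,z\oplus e_i)$, consider all ways to write $e_i=e_{S_1}\oplus e_{S_2}\oplus\dots\oplus e_{S_k}$ with $|S_j|=d$, for a fixed small odd $k$ (I would try $k=3$ when $d\le (n-1)/2$ roughly, and otherwise use complements; some care is needed near $d$ close to $n$, where one can also take $k$ larger). Spread the unit flow uniformly over all such ordered tuples $(S_1,\dots,S_k)$, yielding a uniform random walk-like path $z, z\oplus e_{S_1},\dots$. By the transitivity of the automorphism group of $Q_n$ (translations and coordinate permutations), the congestion a gadget induces on $Q_n^d$ edges is determined by symmetry: each edge of $Q_n^d$ of "type" $S$ receives, when summing over all $z$ and all $i$, the same amount. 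Counting, the total flow-length placed is $k\cdot nN$ (over the $nN$ ordered hypercube edges) spread over $ND$ ordered $Q_n^d$-edges; by symmetry (the group acts transitively on ordered $Q_n^d$ edges, and the gadget distribution is invariant under the stabiliser of $(z,i)$), each $Q_n^d$ edge gets load $kn/D$ per unit of hypercube-edge load.

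\textbf{Composition.} Now take the standard A-flow on $Q_n$: for each pair $(x,y)$, use the uniform mixture over all $\dist_H(x,y)!$ monotone shortest paths (or fix coordinates in increasing order and average over translations). By symmetry, each directed hypercube edge has load equal to (total length)$/(nN)$, i.e. at most $\sum_{x,y}\dist_H(x,y)/(nN)=N^2(n/2)/(nN)=N/2$ for unordered pairs, so congestion about $N/2$ per edge (this is Harper-type routing). Concatenating gadgets gives walks in $Q_n^d$; shortcutting repeated vertices turns walks into paths without increasing congestion. The resulting congestion is at most $(N/2)\cdot kn/D\cdot$(direction factor), and with $k=3$ this gives roughly $\tfrac32 nN/D$, too big by a constant.

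\textbf{Main obstacle.} Achieving exactly the constant $nN/D$ is the delicate part. To fix it, I would avoid the two-stage composition and instead route directly: for a pair $(x,y)$ with $w=x\oplus y$, average over all representations $w=e_{S_1}\oplus\dots\oplus e_{S_m}$ with $m$ equal to a minimal-ish number of $d$-steps (of correct parity, $m\le n$ suffices since odd $d$ lets one reach any vector in at most about $n$ steps — indeed at most $n/d+2$), using uniform averaging over coordinate permutations fixing $w$. Symmetry then gives every $Q_n^d$ edge load $\sum_{x,y} m(x,y)/(ND)\le N^2\cdot n/(ND)\cdot\frac12\cdot 2 = nN/D$, using $m(x,y)\le n$ and counting unordered pairs against undirected edges. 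The key check is the existence bound $m(x,y)\le n$ for every nonzero $w$ when $d$ odd, $d<n$: write $e_i=e_S\oplus e_{S'}\oplus e_{S''}$ style decompositions, or more simply note $Q_n^d$ has diameter at most $n$; I expect verifying this diameter bound uniformly (in particular for $d$ close to $n$) to be the step needing the most care.
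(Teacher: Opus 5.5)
Your final plan (the paragraph headed ``Main obstacle'') is the paper's proof. Put the uniform distribution on all shortest $Q_n^d$-paths between each pair. Translations and coordinate permutations act transitively on the edges of $Q_n^d$ and preserve this flow, so every edge has the same congestion $C$. Then double count: $\frac{ND}{2}C=\sum_P|P|\varphi(P)\le n\binom{N}{2}$. The gadget and composition stages can be dropped. Taking \emph{all} shortest paths is a canonical, automorphism-invariant choice. It is cleaner than averaging one chosen representation over permutations fixing $w$, which needs a consistent choice across each orbit and under the swap $x\leftrightarrow y$.

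The step you leave open is that every pair is joined by a path of length at most $n$. The paper states this as an observation, and it is true. Your parenthetical bound $n/d+2$, however, is false. Take $n$ even and $d=n-1$. After $m$ steps from $0$ you are at $(m \bmod 2)\mathbf{1}\oplus e_I$, where $e_I$ is the indicator vector of the set $I$ of indices used an odd number of times, and $|I|\equiv m \pmod 2$. Reaching $e_1$ therefore forces $m$ odd and $I=[n]\setminus\{1\}$, i.e.\ $m\ge n-1$. So $d$ close to $n$ does require care, but the bound $\le n$ has a short proof. Since $1\le d<n$, take a $d$-set $S$ with $a\in S$, $b\notin S$, and $S'=(S\setminus\{a\})\cup\{b\}$; these two steps show every weight-$2$ vector is reachable in $2$ steps. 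Hence a vector $w$ of even weight $k$ is reachable in $k$ steps. If $k$ is odd, first step by a $d$-set contained in or containing $\supp(w)$. This leaves a vector of even weight $|k-d|$, for a total of $1+|k-d|\le\max(k,d)\le n$ steps.
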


\begin{proof}
For $x,y\in \{0,1\}^n$, let $\mathcal{R}_{x,y}$ be the set of shortest paths between $x$ and $y$ in $Q_n^d$. Observe that every member of $\mathcal{R}_{x,y}$ has length at most $n$. Let $\mathcal{R}=\bigcup_{x,y\in \{0,1\}^n} \mathcal{R}_{x,y}$, and define the A-flow $\varphi:\mathcal{R}\rightarrow [0,1]$ such that $\varphi(P)=|\mathcal{R}_{x,y}|^{-1}$ for every $P\in \mathcal{R}_{x,y}$. By the symmetry of the hypercube, the congestion of every edge of $Q_n^d$ is the same value $C=\cng(\varphi)$. Therefore,
$$\frac{ND}{2}C=e(Q_n^d)C=\sum_{e\in E(Q_n^d)}\cng_{\varphi}(e)=\sum_{P\in\mathcal{R}}|P|\varphi(P)\leq n\sum_{P\in \mathcal{R}}\varphi(P)=n\binom{N}{2}.$$
Comparing the left- and right-hand side, we get $C\leq \frac{nN}{D}$.
\end{proof}

Let $p=p(n)\in (0,1)$, and let~$\cV=\cV_{n,p}$ be the random subset of $\{0,1\}^n$, where each vertex is sampled independently with probability~$p$. We always assume that $n$ is sufficiently large. Write
\[ q=q_{d}(p)=p(1-p)^{2^d-2}.\]
Then given an edge $xy\in Q_n^d$, i.e.,  $\dist_H(x,y)=d$, we have 
$$\mathbb{P}\Big(xy\in E(G_d(\cV))\mid\ x\in \cV\Big)=q.$$

Let $x,y\in \cV$, then our goal is to define a weighting of the paths between~$x$ and~$y$ in $G_d(\cV)$. Given a~$Q_n^d$-path~$P$ between~$x$ and~$y$, we reroute this path using  edges of~$G_d(\cV)$. We do this by replacing every vertex~$x_i$ of the path~$P$ with a vertex $z_i\in \cV$ whose Hamming-distance to~$x_i$ is~$d$, and then we connect~$z_i$ and~$z_{i+1}$ with a special path of length~7 in~$G_d(\cV)$. This motivates the following definition. 

\begin{definition}[Pure paths]\label{def:purepath}
Let $x,y\in \{0,1\}^n$ such that $\dist_H(x,y)=3d$. Let $t,u,f_1,f_2,f_3\in S_d$ satisfy that
$x\oplus y=f_1\oplus f_2\oplus f_3$, where $\supp(f_1)$ is the set of the first~$d$ elements of~$\supp(x\oplus y)$, $\supp(f_2)$ is the set of the second~$d$ elements of~$\supp(x\oplus y)$, $\supp(f_3)$ is the set of the last~$d$ elements of~$\supp(x\oplus y)$, and~$\supp(t)$,~$\supp(u)$,~$\supp(x\oplus y)$ are pairwise disjoint.  Then the path with vertices
\begin{align*}
x_0&=x\\
x_1&=x\oplus t\\
x_2&=x\oplus t\oplus u\\
x_3&=x\oplus t\oplus u\oplus f_1\\
x_4&=x\oplus t\oplus u\oplus f_1\oplus f_2\\
x_5&=y\oplus t\oplus u=x\oplus t\oplus u\oplus f_1\oplus f_2\oplus f_3\\
x_6&=y\oplus u\\
x_7&=y
\end{align*}
is a \emph{pure path} between~$x$ and~$y$.
\end{definition}

It is easy to check that a pure path is indeed a path of length 7 in $Q_n^d$, and $f_1,f_2,f_3$ is uniquely determined for every pair~$(x,y)$ with~$\dist_H(x,y)=3d$. We present a simple lemma about the number of pure paths containing a given edge.

\begin{lemma}\label{lemma:path_count}
Let $e\in E(Q_n^d)$. Then $e$ is contained in at most $O(3^{3d} D^2)$ pure paths.
\end{lemma}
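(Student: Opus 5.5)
A pure path is determined by the data $(x,y,t,u)$, since $f_1,f_2,f_3$ are forced by $x\oplus y$. Equivalently, it is determined by $(x, t, u, f_1,f_2,f_3)$, where the $f_i$ are $d$-sets with pairwise disjoint supports whose order is forced by the ordering of the coordinates of their union. The plan is to fix an edge $e=ab$ of $Q_n^d$ and a position $i\in\{0,\dots,6\}$ with $\{x_i,x_{i+1}\}=\{a,b\}$, in one of the two orientations. We then count the tuples consistent with this; summing over the $14$ choices of position and orientation only costs a constant factor.

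For a given position, the edge $x_ix_{i+1}$ reveals $w:=a\oplus b$, which is exactly one of the step vectors $t,u,f_1,f_2,f_3$. The vertex $a$ equals $x$ XOR a known subset of the step vectors. So once all five step vectors are specified, $x$ and hence the whole path are determined. It therefore suffices to count the ways of choosing the remaining four step vectors given the one equal to $w$.

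Two of the unknown steps, the "free" ones, are chosen with at most $D$ options each. The other two must then be determined up to $3^{3d}$ ambiguity. The key observation is that $x\oplus y=f_1\oplus f_2\oplus f_3$ has support $F$ of size $3d$, and the $f_j$ are the consecutive thirds of $F$ in coordinate order. Hence knowing any one of $f_1,f_2,f_3$, or $F$ itself, constrains the others.

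We split into cases by which step is $w$.

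If $w\in\{t,u\}$, choose the other of $t,u$ ($\le D$ ways) and $f_1$ ($\le D$ ways). The remaining $f_2,f_3$ are not determined by $f_1$ alone. However, the pair $(f_2,f_3)$ forms a $2d$-set lying after $\max\supp f_1$. That gives up to $\binom{n}{2d}$ options, which is too many, so this naive count must be adjusted.

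Better: in that case, charge differently. Choose two of the $f$'s freely ($D^2$) and $t$ or $u$ freely, giving $D^3$, which is also too much. The fix is to use the endpoints: the edge determines $a$, and the path vertices include combinations like $x_2\oplus x_5=f_1\oplus f_2\oplus f_3$.

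So instead I would count by choosing $x$-side data: choose the set $F$? I expect the main obstacle is exactly this case analysis, ensuring only two genuinely free $D$-choices. I would resolve it by noting we may choose which quantities to sum over. Given $e$ at position $i$, pick two of the five step vectors $t,u,f_1,f_2,f_3$ other than $w$ freely. The remaining two then combine into a set of size $\le 2d$ whose structure (consecutive thirds of $F$) is forced once $F$ is known. $F$ is the union of three disjoint $d$-sets, of which we know at least one or two. When only $f_1$ (say) is known, we instead choose $f_2,f_3$ freely and are left with $t$ or $u$ unknown. In that subcase, use that the fixed edge lies among $x_0x_1$ or $x_1x_2$ or $x_5x_6$ or $x_6x_7$, so we can swap roles.

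The ambiguity factor $3^{3d}$ absorbs the choice of how a known $3d$-set (or a $2d$-set) is split among $f$'s and assignment of coordinates, bounding everything by $O(3^{3d}D^2)$.
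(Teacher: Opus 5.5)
The bound you are aiming for, $O(3^{3d}D^2)$, is false as literally stated, so no case analysis can produce it. Fix $e=ab$ and count only the pure paths with $x_0=a$ and $x_1=b$. Then $t=a\oplus b$ is forced. However, $u$ may be any of the $\binom{n-d}{d}$ elements of $S_d$ whose support avoids $\supp(t)$. Also $y$ may be $x\oplus\mathbf{1}_F$ for any of the $\binom{n-2d}{3d}$ sets $F$ of size $3d$ avoiding $\supp(t)\cup\supp(u)$, and distinct choices give distinct pure paths through $e$. For $d^2=o(n)$ this is at least $(1-o(1))D^4/\binom{3d}{d,d,d}\geq(1-o(1))3^{-3d}D^4$ pure paths; for $d=3$ it is $\Theta(n^{12})$ against the claimed $O(n^6)$. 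This exceeds $3^{3d}D^2$ as soon as $D\gg 3^{3d}$, in particular throughout the range $d\le c_0\log n$ used later.

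So your plan breaks exactly at the step where ``two of the unknown steps are free and the other two are determined up to $3^{3d}$ ambiguity'': four of the five step vectors are genuinely free. Your own remark that $(f_2,f_3)$ still leaves about $\binom{n}{2d}$ options was the correct signal. Swapping roles or absorbing the ambiguity into $3^{3d}$ cannot repair it. The $D^2$ in the statement is a typo: the paper's proof ends with $C=O(3^{3d}D^4)$, and $D^4$ is what enters the congestion bound in the proof of Theorem~\ref{thm:everything}.

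Aimed at the correct bound, your framework works and is slightly sharper than the paper's argument. Fix one of the $14$ positions and orientations. The step sequence of a pure path is $t,u,f_1,f_2,f_3,t,u$, so $w=a\oplus b$ is the step vector at that position. Once the other step vectors are known, $x$ (hence the whole path) is determined by $a$. If $w\in\{t,u\}$, choose the other of $t,u$ (at most $D$ ways) and the set $F=\supp(x\oplus y)$ (at most $\binom{n}{3d}\le D^3$ ways); $F$ determines $f_1,f_2,f_3$. If $w=f_j$, choose $t,u$ and the two remaining $f$'s (at most $D^4$ ways). This gives at most $14D^4$ pure paths through $e$, with no $3^{3d}$ factor.

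The paper instead enlarges to semi-pure paths, allowing arbitrary splittings $f_1\oplus f_2\oplus f_3=x\oplus y$. Unlike pure paths, these are invariant under coordinate permutations, so edge-transitivity of $Q_n^d$ lets the paper divide the at most $7\cdot 3^{3d}ND^5$ path--edge incidences by $|E(Q_n^d)|=ND/2$. The factor $3^{3d}$ is the cost of the multinomial $\binom{3d}{d,d,d}$ in that enlargement.
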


\begin{proof}
Extend the notion of pure paths in~\refD{def:purepath} to allow any choice of $f_1,f_2,f_3\in S_d$ satisfying $f_1\oplus f_2\oplus f_3=x\oplus y$, and call such paths \emph{semi-pure}. Let~$C$ be the number of semi-pure paths containing~$e$, then by the symmetry of $Q_n^d$,~$C$ does not depend on the choice of~$e$. The total number of semi-pure paths in~$Q_n^d$ is at most 
$$ND^3\cdot D^2\cdot \frac{(3d)!}{(d!)^3}\leq 3^{3d}ND^5,$$
as there are at most $N\binom{n}{3d}<ND^3$ choices for the pair $(x,y)$, $D^2$ choices for the pair $(t,u)$, and exactly~$\binom{3d}{d,d,d}=\frac{(3d)!}{(d!)^3}$ choices for $f_1,f_2,f_3$. By counting the total number of edges of all semi-pure paths in two ways, we arrive to the inequality 
$$C\cdot \frac{ND}{2}\leq 7\cdot 3^{3d}ND^5,$$
which gives $C=O(3^{3d}D^4)$.
\end{proof}

The following lemma is our main technical result showing that if $x,y\in \cV$ such that $\dist_H(x,y)=3d$ and at least a constant proportion of the neighbours of~$x$ and~$y$ in~$Q_n$ are not sampled in~$\cV$, then~$x$ and~$y$ can be connected by many pure paths in~$G_d(\cV)$ with very high probability. To this end, given~$\alpha\in [0,1]$, say that 
$$x\in \cV\mbox{ is }\alpha\mbox{-\textit{full} if }|S_1(x)\cap \cV|\geq (1-\alpha)n.$$

\begin{lemma}\label{lemma:main}
Let $\alpha\in (0,1]$ be fixed, then there exists $c>0$ such that the following holds. Let  $d \ge 3$ and $p\in (0,1)$ such that $q=q_d(p)\geq n^{-0.1}$ and $(\alpha/8)^d\geq n^{-0.1}$. Let $x,y\in \cV=\cV_{n,p}$ such that $\dist_H(x,y)=3d$. Let $Y$ be the number of pure paths in $G_d(\cV)$ between~$x$ and~$y$. Then
$$\mathbb{P}\left(Y\geq c^{d}q^7D^2 \Big|\ x,y\mbox{ are not }\alpha\mbox{-full}\right)\geq 1-\exp(-n^{1.5}).$$
\end{lemma}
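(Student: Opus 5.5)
The plan is to count pure paths $x_0,\dots,x_7$ through the free parameters $(t,u)$. Since $f_1,f_2,f_3$ are determined by $(x,y)$, a pure path lies in $G_d(\cV)$ exactly when $x_1,\dots,x_6\in\cV$ and each of the seven cubes $\cube(x_i,x_{i+1})$ meets $\cV$ only in its two endpoints.

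\emph{Step 1: restrict $t$ and $u$ using non-fullness.} The danger near the endpoints is that $\cube(x,x\oplus t)$ contains the hypercube neighbours $x\oplus e_j$, $j\in\supp(t)$. Conditioned on $x$ not being $\alpha$-full, these neighbours are all unsampled whenever $\supp(t)\subseteq A_x:=\{j: x\oplus e_j\notin\cV\}$, and $|A_x|\ge\alpha n$. Likewise, $\cube(y\oplus u,y)$ contains the neighbours $y\oplus e_j$, $j\in\supp(u)$, so we want $\supp(u)\subseteq A_y$, where $|A_y|\ge \alpha n$. I would first expose the $2n$ neighbours of $x$ and $y$, fixing $A_x$ and $A_y$. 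I then keep only pairs $(t,u)$ with $\supp(t)\subseteq A_x\setminus\supp(x\oplus y)$, $\supp(u)\subseteq A_y\setminus(\supp(x\oplus y)\cup\supp(t))$, and $\supp(t)\cap\supp(u)=\emptyset$. Since $3d\ll\alpha n$, after possibly splitting $A_x\cup A_y$ in half there are at least $\binom{\alpha n/4}{d}^2\ge(\alpha/8)^{2d}D^2$ such pairs; here $(\alpha/8)^d\ge n^{-0.1}$ keeps the count large. The remaining events involve only vertices outside $B_1(x)\cup B_1(y)$ and are independent of the exposed variables. By Harris/FKG, each remaining cube-emptiness event only becomes more likely given the (decreasing) conditioning. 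Each of the six required vertices $x_1,\dots,x_6$ appears with probability $p$, and each cube minus endpoints is empty with probability at least $(1-p)^{2^d-2}$. Hence every admissible $(t,u)$ gives a pure path of $G_d(\cV)$ with probability at least $q^6\ge q^7$, so $\mathbb{E}Y\ge (\alpha/8)^{2d}q^7D^2$.

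\emph{Step 2: concentration.} This is the main obstacle. The indicator events $Z_{t,u}$ are increasing in ``$x_i\in\cV$'' but decreasing in emptiness, and distinct $(t,u)$ share vertices. Sharing happens, for instance, through $x_1=x\oplus t$ for equal $t$, or through overlapping cubes $\cube(x,x\oplus t)$ when $\supp(t)\cap\supp(t')\neq\emptyset$. My first approach is to force a grid structure so that \Cref{lemma:grid} (or its obvious extension to an $m\times m'$ array partitioned into $\max(m,m')$ transversals) applies. Partition the coordinates of $A_x\setminus\supp(x\oplus y)$ into $d$ blocks, and choose $t$ with exactly one coordinate per block; do the same for $u$ in $A_y$. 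For two pairs with $t\ne t'$ and $u\ne u'$ the relevant cubes can still intersect, so I would refine further by indexing $t$ by a ``type'' and checking that two paths whose $t$'s and $u$'s differ in every block use disjoint vertex sets outside $B_1(x)\cup B_1(y)$. This works because any vertex of $\cube(x_i,x_{i+1})$ with $i\in\{0,1,5,6\}$, other than the shared ones, determines the block-coordinates it used. Passing to a subfamily of such pairwise ``fully distinct'' pairs, for example via a Latin-square-type decomposition of the product of blocks, loses only a factor $c^d$. On each transversal class the indicators are independent with mean at least $q^7$, and the classes have size $\ge c^dD/n^{O(1)}$. Chernoff then gives failure probability $\exp(-c^dq^7D/n^{O(1)})$. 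With $q,(\alpha/8)^d\ge n^{-0.1}$ and $d\ge3$ we have $D\ge n^3/6$, so this is at most $\exp(-n^{1.5})$ after the union bound over classes.

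If the grid decomposition gets too messy, my fallback is Janson's lower-tail inequality, $\mathbb{P}(Y\le\mathbb{E}Y/2)\le\exp(-(\mathbb{E}Y)^2/(8\bar\Delta))$. The handling of the decreasing emptiness events would go through the standard two-stage trick: first expose the sampled vertices, then apply the deletion method for empty cubes. The main task there is to bound $\bar\Delta$ by roughly $(\mathbb{E}Y)^2/n^{1.6}$, which follows since two dependent pairs $(t,u),(t',u')$ must share a support coordinate, costing a factor $d/n$.
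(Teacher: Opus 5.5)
\textbf{There is a genuine gap in Step 2, the concentration step.} Step 1 is fine apart from a slip. A fixed admissible $(t,u)$ succeeds with probability $p^6(1-p)^{7(2^d-2)-2d}\ge q^7/p\ge q^7$, but not ``$\ge q^6$''. Your conclusion about $\mathbb{E}Y$ stands.

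The block argument fails for the following reason.
\begin{itemize}
\item Two pairs in the same class must have $t,t'$ differing in every block. With one coordinate per block this means disjoint supports. So a class contains at most $|A_x|/d\le n/d$ pairs; the cyclic Latin-square classes have size $m\approx\alpha n/(4d)$.
\item Hence your claim that classes have size $c^dD/n^{O(1)}$ with $c^dq^7D/n^{O(1)}\ge n^{1.5}$ is false. Each class has mean at most $n/d$, so the per-class Chernoff bounds give nothing better than $\exp(-O(n/d))$.
\item That is neither the stated bound nor enough for the union bound over the roughly $2^n n^{3d}$ pairs $(x,y)$ where the lemma is applied.
\item The obstruction is structural. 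Independence of two indicators requires $|\supp(t)\cap\supp(t')|\le 1$, so that the cubes $\cube(x,x\oplus t)$ and $\cube(x,x\oplus t')$ meet only in revealed vertices. This caps any class at $\binom{n}{2}/\binom{d}{2}$ members. Since every member pays the full factor $\approx q^7$, even optimal packings give only about $\exp(-n^{1.3+o(1)})$ when $q\approx n^{-0.1}$.
\end{itemize}

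The Janson fallback does not close the gap either.
\begin{itemize}
\item The lower-tail Janson inequality rests on Harris and needs all events increasing (or all decreasing). Your $X_{t,u}$ require some vertices present and others absent.
\item ``Deletion'' cannot recover the count when $(1-p)^{7(2^d-2)}$ is small, since then most candidates have a nonempty cube.
\item Your dependence estimate is off. Sharing one coordinate creates no dependence, because the shared vertex $x\oplus e_j$ is already revealed. If the cost really were a factor $d/n$, you would only get $\exp(-\tilde O(n))$. The $n^{2-o(1)}$ exponent comes from the fact that dependence forces $t=t'$, $u=u'$, or two shared coordinates on one side.
\end{itemize}

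\textbf{The missing idea is to expose the seven edges in rounds matched to their dependency structure.} This way the heavily overlapping cubes at $x$ and $y$ are faced with only one factor of $q$, and with bounded differences rather than independence.
\begin{enumerate}
\item First control only $|T^*|=\#\{t\in T: x\oplus t\in N_{d,V}(x)\}$, and likewise $|U^*|$. This is a function of the vertices at distance $2,\dots,d$ from $x$. A vertex at distance $r$ lies in at most $D(d/n)^r$ of these cubes, so McDiarmid gives failure probability $\exp(-\Omega(c_0^{2d}q^2n^2/d^4))\le\exp(-n^{1.55})$.
\item Conditional on $T^*=T_0$ and $U^*=U_0$, the second-edge sets $\cube(x\oplus t,x\oplus t\oplus u)\setminus\{x\oplus t\}$ are disjoint whenever $t\ne t'$, regardless of support overlap, and symmetrically at $y$. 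So \Cref{lemma:grid} applies with transversals of size about $|T_0|\approx c_0^dqD$, giving $\exp(-\Omega(c_0^{2d}q^4D))$.
\item Finally, the middle sets $L_{t,u}$ are pairwise disjoint for distinct pairs, so a plain Chernoff bound finishes.
\end{enumerate}
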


\begin{proof}
    Fix some $\cS_x\subseteq S_1(x)$ and $\cS_y\subseteq S_1(y)$ with $\min(|\cS_x|,|\cS_y|)\geq \alpha n$, and condition on the event 
    $$\mathcal{E}=\Big\{\cS_x=S_1(x)\setminus \cV\mbox{ and }\cS_y=S_1(y)\setminus \cV\Big\}$$ 
    in the rest of the proof. In other words, the elements of $S_1(x)\cup S_1(y)$ are revealed and $\cS_x\cup\cS_y$ are the set of the elements that are not included in~$\cV$.  Writing $e_1,\dots,e_n$ for the standard unit vectors,  every element of $S_1(x)$ is of the form $x\oplus e_i$ for some $i\in [n]$. 
    Let $\cB'_x=\{i\in [n]:e_i\oplus x\in \cS_x\}$ and $\cB'_y=\{i\in [n]:e_i\oplus y\in \cS_y\}$. 
    Since $|\cB_x'|=|\cS_x|\ge \alpha n$, $|\cB_y'|=|\cS_y|\ge \alpha n$, and $\supp(x\oplus y)=3d < \alpha n/4$, we can
    fix some $\cB_x\subseteq \cB'_x\setminus \supp(x\oplus y), \cB_y\subseteq \cB'_y\setminus \supp(x\oplus y)$ such that $|\cB_x|=|\cB_y|=\alpha n/4$ and $\cB_x\cap \cB_y=\emptyset$. 
    
    Define 
    $$T:=\{t\in S_d: \supp(t)\subseteq \cB_x\}\quad\mbox{ and }\quad U:=\{u\in S_d: \supp(u)\subseteq \cB_y\}.$$
    Here, $T$ and $U$ are the set of potential vectors $t$ and $u$ which we use to build a pure path connecting $x$ and $y$. We have 
    $$|T|=|U|=\binom{\alpha n/4}{d}\geq 2c_0^d D$$ for $c_0=\alpha/8$. Let~
    $$T^*=\{t\in T: x\oplus t\in N_{d,V}(x)\}\quad  \mbox{ and }\quad U^*=\{u\in U:  y\oplus u\in N_{d,V}(y)\}.$$
    \begin{claim}
    The following holds:
    $$\mathbb{P}\Big(\min\big(|T^*|,\;|U^*|\big)\leq c_0^dqD\Big)\leq \exp(-n^{1.55}).$$ 
    \end{claim}
    \begin{proof}
    Recall that~$ab$ is an edge of~$G_d(\cV)$ if $\dist_H(a,b)=d$ and $\cube(a,b)\cap \cV=\{a,b\}$. Therefore, given~$t\in T$, we have
    $$\mathbb{P}(t\in T^*)=p\cdot(1-p)^{2^d-d-2}\geq q.$$
    This is true as by conditioning on $\mathcal{E}$, we already ensured that the $d$ elements of $\cube(x,x\oplus t)\cap S_1(x)$ are not in~$\cV$. Therefore,  
    $$\mathbb{E}(|T^*|)\geq 2c_0^dqD.$$
     Observe that~$|T^*|$ is a function of the events $\{z\in \cV\}$, where $2\leq \dist_H(x,z)\leq d$. Let $z\in S_r(x)$ with $2\leq r\leq d$. Then $z$ is contained in $\cube(x,x\oplus t)$ for at most $\binom{n-r}{d-r}\leq D(\frac{d}{n})^r=:\Delta_z$ values of $t\in T$. It means that the random variable $|T^*|$ is a function of the independent events $\{z\in \cV\}$, with the  property that changing the outcome of $\{z\in \cV\}$ can change the value of $|T^*|$ by at most $\Delta_z$. By McDiarmid's inequality (\Cref{lemma:mcdiarmid}), 
    $$\mathbb{P}\left(|T^*|\leq \frac{1}{2}\mathbb{E}|T^*|\right)\leq \exp\left(-\frac{(\mathbb{E}|T^*|)^2}{2\sum_{r=2}^d\sum_{z\in S_r(x)}\Delta_z^2}\right).$$
    Here, 
    $$\sum_{r=2}^d\sum_{z\in S_r(x)}\Delta_z^2\leq \sum_{r=2}^d \binom{n}{r}D^2\left(\frac{d}{n}\right)^{2r}<D^2\sum_{r=2}^d\left(\frac{d^2}{n}\right)^r\leq \frac{2D^2d^4}{n^2}.$$
    Plugging this back to the inequality above, we have
     $$\mathbb{P}\left(|T^*|\leq \frac{1}{2}\mathbb{E}|T^*|\right)\leq \exp\left(-\frac{\mathbb{E}(|T^*|)^2}{2D^2d^4/n^2}\right)\leq \exp\left(-\frac{2c_0^{2d}q^2n^2}{d^4}\right)\leq \frac{1}{2}\exp(-n^{1.55}),$$
     where the last inequality follows by our assumption that $c_0^d=(\alpha/8)^d\geq n^{-0.1}$ and $q\geq n^{-0.1}$. Similarly,
     $$\mathbb{P}\left(|U^*|\leq c_0^dqD\right)\leq \frac{1}{2}\exp(-n^{1.55}),$$
     finishing the proof.
     \end{proof}
    In what follows, we fix some 
    \[T_0\subseteq T\quad \text{ and } \quad U_0\subseteq U\]
    such that $\min(|T_0|,|U_0|)\geq c_0^{d}qD$. In the rest of the proof, we further condition on the event 
    $$\mathcal{E}'=\{T^*=T_0\quad \mbox{ and }\quad U^*=U_0\}.$$ In other words, we revealed the events $\{z\in \cV\}$ for some subset of elements of $B_d(x)\cup B_d(y)$. Let 
    $$Z:=\left\{(t,u)\subseteq T_0\times U_0: x\oplus t\oplus u\in N_{d,V}(x\oplus t)\mbox{ and }y\oplus u\oplus t\in  N_{d,V}(y\oplus u)\right\}.$$

    \begin{claim}The following holds:
    $$\mathbb{P}\left(|Z|\leq \frac{1}{2}c_0^{2d}q^4D^2\right)\leq \exp(-n^{1.6}).$$ 
    \end{claim}

    \begin{proof}
    Let $(t,u)\in T_0\times U_0$, and let $X_{t,u}$ be the indicator random variable of the event $\{(t,u)\in Z\}$. Write $$C_{t,u}=\cube(x\oplus t,x\oplus t\oplus u)\setminus \{x\oplus t\}\mbox{\ \ \ \ and \ \ \ }D_{t,u}=\cube(y\oplus u,y\oplus u\oplus t)\setminus \{y\oplus u\}.$$ Then every element of $C_{t,u}$ and $D_{t,u}$ has Hamming-distance more than $d$ to both $x$ and $y$, and also $C_{t,u}\cap D_{t,u}=\emptyset$ (because $\supp(t),\supp(u)$, and $\supp(x\oplus y)$ are pairwise disjoint). Therefore, no element of $C_{t,u}$ and $D_{t,u}$ has been revealed, showing that
    $$\mathbb{E}X_{t,u}=q^2.$$
     Thus,
     $$\mathbb{E}|Z|=q^2|T_0||U_0|\geq c_0^{2d}q^4D^2.$$ 
     
     Recall that $\supp(t)\cap \supp(u)=\emptyset$ for every pair $(t,u)\in T_0\times U_0$, as $\supp(t)\subseteq \cB_x$ and $\supp(u)\subseteq \cB_y$. This implies that if $(t,u),(t',u')\subseteq T_0\times U_0$, then $C_{t,u}\cap C_{t',u'}=\emptyset$ if $t\neq t'$, and $D_{t,u}\cap D_{t',u'}=\emptyset$ if $u\neq u'$. Moreover, $C_{t,u}\cap D_{t',u'}=\emptyset$ if $(t,u)\neq (t',u')$, because every vertex of $C_{t,u}$ has Hamming-distance at most~$2d$ from~$x$, while it has Hamming-distance more than $4d$ from $y$, and similarly for $D_{t',u'}$. Therefore, $X_{t,u}$ is mutually independent from $\{X_{t',u'}:t\neq t'\text{ and } u\neq u'\}$. By \Cref{lemma:grid}, using that $\max(|T_0|,|U_0|)\leq D$, we have
     $$\mathbb{P}\left(|Z|\leq \frac{1}{2}\mathbb{E}|Z|\right)\leq D\exp\left(-\frac{\mathbb{E}|Z|}{8D}\right)<D\exp(c_0^{2d}q^4 D)\leq \exp(-n^{1.6}).$$
     The last inequality holds by our assumption that $d\geq 3$, which ensures $D=\Omega(n^3)$, and $q,c_0^d\geq n^{-0.1}$.
    \end{proof}

Fix some 
\[Z_0\subseteq Z\]
such that $|Z_0|\geq \frac{1}{2}c_0^{2d}q^4D^2$. In the rest of the proof, we further condition on the event~$$\mathcal{E}''=\{Z=Z_0\}.$$ So far, we revealed only events~$\{z\in \cV\}$ for some subset of $B_{2d}(x)\cup B_{2d}(y)$.

    Let $f_1,f_2,f_3\in S_d$ be as described in the definition of a pure path, that is, $x\oplus y=f_1\oplus f_2\oplus f_3$, $\supp(f_1)$ are the first~$d$ elements of $\supp(x\oplus y)$, $\supp(f_2)$ are the second~$d$ elements of $\supp(x\oplus y)$, and $\supp(f_3)$ are the last~$d$ element of $\supp(x\oplus y)$.  Let 
    \[W\subseteq Z_0\]
    be the set of pairs $(t,u)\in Z_0$ such that 
    $$x\oplus t\oplus u,\ x\oplus t\oplus u\oplus f_1,\ x\oplus t\oplus u\oplus f_1\oplus f_2,\ y\oplus u\oplus t$$ is a path of length 3 in~$G_d(\cV)$. To simplify notation, write $x_1,x_2,x_3,x_4$ for these four vertices, and let 
    $$L_{t,u}=\cube(x_1,x_2)\cup\cube(x_2,x_3)\cup \cube(x_3,x_4)\setminus \{x_1,x_4\}.$$
    Then $(t,u)\in W$ only depends on the events $\{z\in \cV\}$ for $z\in L_{t,u}$. Moreover, every element of $L_{t,u}$ has Hamming-distance at least $2d+1$ to both $x$ and $y$, using that~$\supp(t)$ and~$\supp(u)$ are disjoint from $\supp(x\oplus y)=\supp(f_1)\cup\supp(f_2)\cup\supp(f_3)$. Therefore,
    $$\mathbb{P}((t,u)\in W)=p^2(1-p)^{3\cdot 2^d-6}=\frac{1}{q}q^3,$$
    and thus
    $$\mathbb{E}|W|\geq \frac{1}{p}q^3|Z_0|\geq \frac{1}{2p}c_0^{2d}q^7D^2.$$ 
    \begin{claim}
     If $(t,u),(t',u')\in Z_0$ are distinct, then  $L_{t,u}$ and $L_{t',u'}$ are disjoint.
    \end{claim}
    \begin{proof}
       Let $I=[n]\setminus \supp(x\oplus y)$, and for $z\in \{0,1\}^n$, write $z_I$ for the restriction of $z$ to the coordinates in~$I$. Using that $\supp(f_1),\supp(f_2),\supp(f_3)$ are disjoint from $I$, we have  $z_I=(x\oplus t\oplus u)_I$ for every~$z\in L_{t,u}$. But observe that $(t\oplus u)_I\neq (t'\oplus u')_I$, which shows that $z_I\neq z'_I$ if $z\in L_{t,u}$ and $z'\in L_{t',u'}$. Indeed, assume that $(t\oplus u)_I=(t'\oplus u')_I$, then $(t\oplus t')_I=(u\oplus u')_I$. Here, $\supp(t),\supp(t')\subseteq I_x\subseteq I$ and $\supp(u),\supp(u')\subseteq I_y\subseteq I$ with $I_x\cap I_y=\emptyset$. This implies that $t\oplus t'=u\oplus u'$ also holds. But $\supp(t\oplus t')\subseteq I_x$ and $\supp(u\oplus u')\subseteq I_y$. Hence, we must have $t\oplus t'=0$ and $u\oplus u'=0$, which implies $(t,u)=(t',u')$.
    \end{proof}
    The previous claim implies that the events $\{(t,u)\in W\}$ for~$(t,u)\in Z_0$, are independent. Thus~$|W|$ is the sum of independent indicator random variables, so by the multiplicative Chernoff bound (\Cref{lemma:chernoff}),
    $$\mathbb{P}\left(|W|\leq \frac{\mathbb{E}|W|}{2}\right)\leq \exp\left(-\frac{1}{8}\mathbb{E}|W|\right)\leq \exp\left(-\frac{1}{16p}c_0^{2d}q^7 D^2\right)\leq \exp(-n^{1.6}).$$
    Each element of $W$ gives rise to a pure path between $x$ and $y$. Thus,  there are at least $\frac{1}{4p}c_0^{2d}q^7D^2>\frac{1}{4}c_0^{2d}q^7D^2$ pure paths between $x$ and $y$ in $G_d(\cV)$ with probability at least $1-\exp(-n^{1.6})$, conditioned on the events $\mathcal{E},\mathcal{E'},\mathcal{E''}$. But then we also have 
    $$\mathbb{P}\left(Y\geq \frac{1}{4}c_0^{2d}q^7D^2 \Big|\ x,y\mbox{ are not }\alpha\mbox{-full}\right)\geq (1-\exp(-n^{1.55}))(1-\exp(-n^{1.6}))^2\geq 1-\exp(-n^{1.5}).$$ Thus, the choice $c=c_0^2/2$ satisfies the required properties, completing the proof of~\Cref{lemma:main}. 
\end{proof}

Next, we collect a number of typical properties of $V$, which will ensure that we can build an A-flow with small congestion. As $\alpha$-full elements form the bottleneck of the expansion, we need to take special care of them. First, we consider the range $p\in (\eps,1-\eps)$ in~\Cref{lemma:conditions1}, then the range $p\in (0,1/2-\eps)$ in~\Cref{lemma:conditions2}.

\begin{lemma}\label{lemma:conditions1}
Let $\eps>0$, then there exists $\alpha,c,K>0$ such that the following hold. Let $p\in (\eps,1-\eps)$, then w.h.p
\begin{enumerate}[(i)]
    \item\label{a:1} $|V|\geq cN$;
    \item\label{a:2} for every $x\in \{0,1\}^n$, $|S_3(x)\cap V|\geq cn^3$;
    \item\label{a:3} for every $x\in \{0,1\}^n$, the number of $\alpha$-full elements in $B_3(x)$ is at most $K$;
    \item\label{a:4} for every $x\in V$, if $x$ is $\alpha$-full, then there are at least $cn^3$ vertices in $S_3(x)$ that are the endpoints of some path of length $3$ in $G_1(V)$ starting at $x$;  
    \item\label{a:5} for every $x,y\in V$ such that $\dist_H(x,y)=9$ and $x,y$ are not $\alpha$-full, there are at least $cn^6$ pure paths in $G_3(V)$ between $x$ and $y$;
     \item\label{a:6} for every $x\in V$, if $x$ is not $\alpha$-full, then $|N_{3,V}(x)|\geq cn^3$.
\end{enumerate}
\end{lemma}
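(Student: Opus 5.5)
The plan is to choose $\alpha$ small depending on $\eps$ first, then $K$, and finally $c$. Each property is proved by a first-moment or concentration bound, followed by a union bound over the $N=2^n$ (or at most $N^2$) relevant vertices or pairs. Throughout, $d=3$ or $d=1$ is fixed and $p\in(\eps,1-\eps)$. Hence $q_3(p)=p(1-p)^6$ is bounded below by a constant depending on $\eps$, and the hypotheses of \Cref{lemma:main} hold trivially for large $n$.

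The first two properties are routine. For (\ref{a:1}), $|V|\sim\mathrm{Binom}(N,p)$, so the multiplicative Chernoff bound (\Cref{lemma:chernoff}) gives $|V|\geq pN/2\geq \eps N/2$ w.h.p. For (\ref{a:2}), $|S_3(x)\cap V|\sim\mathrm{Binom}(\binom n3,p)$, so $\mathbb{P}(|S_3(x)\cap V|<p\binom n3/2)\leq \exp(-\Omega(n^3))$, which survives a union bound over $2^n$ vertices.

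For (\ref{a:3}), choose $\alpha$ so that the probability of being $\alpha$-full is exponentially small with a good rate. By \Cref{lemma:chernoff_concrete}, $\mathbb{P}(|S_1(x)\cap V|\geq(1-\alpha)n)=\mathbb{P}(\mathrm{Binom}(n,1-p)\leq\alpha n)\leq \exp(-nD(\alpha\|1-p))$. As $\alpha\to0$, $D(\alpha\|1-p)\to\log(1/p)\geq\log(1/(1-\eps))>0$, so for small $\alpha$ this is at most $2^{-\beta n}$ for some $\beta=\beta(\eps)>0$. The full events are not independent for nearby vertices, so I take $K$ elements $x^1,\dots,x^K$ of $B_3(x)$. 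They can always be chosen so that each has at least $n/2$ ``private'' neighbour coordinates whose neighbours $x^j\oplus e_i$ are not neighbours of any other chosen $x^{j'}$; this is possible because two points at distance $2$ share only two common neighbours. Being $\alpha$-full for all of them then requires each $x^j$ to have at most $\alpha n$ missing among its private neighbours, and these events are independent. Hence the probability is at most $2^{-\beta' nK}$ with $\beta'>0$. The number of $K$-subsets of $B_3(x)$ is at most $n^{3K}$, so a union bound over $x$ gives $2^n n^{3K}2^{-\beta' nK}=o(1)$ once $K>2/\beta'$.

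For (\ref{a:6}), note that $x$ not $\alpha$-full means at least $\alpha n$ neighbours are missing. Conditioning on $S_1(x)\setminus V$, I reuse the first claim in the proof of \Cref{lemma:main} with $d=3$. That claim gives $|N_{3,V}(x)|\geq |T^*|\geq c_0^3qD=\Omega(n^3)$ with failure probability $\exp(-n^{1.55})$, which is fine after a union bound. For (\ref{a:5}), apply \Cref{lemma:main} with $d=3$ directly: it gives at least $c^3q^7\binom n3^2=\Omega(n^6)$ pure paths with probability $1-\exp(-n^{1.5})$. A union bound over the at most $N^2=4^n$ pairs finishes this property.

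The main obstacle is (\ref{a:4}), since $x$ being $\alpha$-full makes it have few $G_1$-neighbours. Note that $G_1(V)$ is just $Q_n$ restricted to $V$, and an $\alpha$-full $x$ has at least $(1-\alpha)n$ sampled neighbours. The plan is to count paths $x,\ x\oplus e_i,\ x\oplus e_i\oplus e_j,\ x\oplus e_i\oplus e_j\oplus e_k$ with $i,j,k$ distinct and all four vertices in $V$. For a fixed set $\{i,j,k\}$, it suffices that some ordering has $x\oplus e_i$, $x\oplus e_i\oplus e_j$ and the endpoint all in $V$. Conditioned on $S_1(x)\cap V$, the sets $\{i,j,k\}$ with $x\oplus e_i\in V$ number $\Omega(n^3)$. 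Ask that the endpoint be in $V$ (an independent event for each triple) and that at least one of the three vertices $x\oplus e_i\oplus e_j$ in $S_2(x)$ lies in $V$. These depend on $S_2$ and $S_3$ vertices with bounded overlap. McDiarmid's inequality (\Cref{lemma:mcdiarmid}) then applies: each $S_3$ vertex affects one triple and each $S_2$ vertex affects $O(n)$ triples. This gives $\Omega(n^3)$ good endpoints with failure probability $\exp(-\Omega(n^{6}/(n^3+n^2\cdot n^2)))=\exp(-\Omega(n^2))$. That is enough to union bound over $2^n$ vertices. Finally, set $c$ to the minimum of all the constants obtained.
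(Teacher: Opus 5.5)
Your proposal is correct. For (i), (ii), (iii) and (v) it is essentially the paper's argument: your private-neighbour sets are exactly the sets $S_1(x_i)\setminus\bigcup_{j\neq i}S_1(x_j)$ used there. You take different routes for (iv) and (vi).

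For (vi), the paper derives the bound from (v). It picks a sampled non-$\alpha$-full $y$ with $\dist_H(x,y)=9$, and notes that the $cn^6$ pure paths from $x$ to $y$ use at least $cn^6/D$ distinct first edges, since a first edge extends to at most $D$ pure paths. You instead use only the $T^*$-half of the first claim in the proof of \Cref{lemma:main}. That needs nothing beyond conditioning on $S_1(x)\setminus V$ and McDiarmid, and gives $|N_{3,V}(x)|\ge|T^*|\ge c_0^3qD$ directly. This is more self-contained. It does not depend on (v), and it does not require a non-full sampled vertex in $S_9(x)$. For that vertex the paper cites (ii) and (iii), although these are stated only for radius $3$; the radius-$9$ versions hold by the same proofs.

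For (iv), the paper grows $T_1\subseteq S_1(x)$, $T_2\subseteq S_2(x)$, $T_3\subseteq S_3(x)$ layer by layer. Each $T_i$ is binomial given $T_{i-1}$, so plain Chernoff suffices. Your single McDiarmid bound over triples, with Lipschitz constants $1$ on $S_3(x)$ and $O(n)$ on $S_2(x)$, reaches the same $\exp(-\Omega(n^2))$ failure probability in one step.

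Two small slips:
\begin{itemize}
\item You say an $\alpha$-full vertex has few $G_1$-neighbours. It is the reverse: it has many $G_1$-neighbours and few $G_3$-neighbours. The rest of your (iv) argument uses the correct version.
\item In (iv) you should restrict to triples $\{i,j,k\}$ with all of $x\oplus e_i,x\oplus e_j,x\oplus e_k$ in $V$; there are $\binom{(1-\alpha)n}{3}=\Omega(n^3)$ of them. If only $x\oplus e_i\in V$, a sampled middle vertex $x\oplus e_j\oplus e_k$ gives no path from $x$.
\end{itemize}
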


\begin{proof}
Let $\alpha=\eps/3$ and $K=\lceil 100/\eps\rceil$. Then we show that any sufficiently small $c$ with respect to $\eps$ suffices.

\eqref{a:1} As $|V|$ is the sum of indicator random variables and $\mathbb{E}|V|=pN>\eps N$, the multiplicative Chernoff bound implies $\mathbb{P}(|V|\leq \eps N/2)\leq \exp(-\eps N/8)=o(1)$. Therefore, any $c<\eps/2$ suffices.

\eqref{a:2} As $|S_3(x)\cap V|$ is the sum of independent indicator random variables and $\mathbb{E}|S_3(x)\cap V|=p\binom{n}{3}>\eps n^3/10$, the multiplicative Chernoff bound implies that $\mathbb{P}(|S_3(x)\cap V|\leq \eps n^3/20)\leq \exp(-\eps n^3/80)\leq \exp(-n^2)$. Therefore, by the union bound, $$\mathbb{P}(\exists x, |S_3(x)\cap V|\leq  \eps n^3/20)\leq 2^n\exp(-n^2)=o(1).$$ Therefore, any $c<\eps/20$ suffices.

\eqref{a:3} Let $x_1,\dots,x_K\in \{0,1\}^n$ be distinct vertices, and let 
$$T_i:=S_1(x_i)\setminus \bigcup_{j:j\neq i}S_1(x_j).$$
As $|S_1(x_i)\cap S_1(x_j)|\leq 2$ for every $1\leq i<j\leq n$, we have $|T_i|\geq n-2K$. Moreover,
$$\mathbb{P}(x_i\mbox{ is }\alpha\mbox{-full})\leq \mathbb{P}(|T_i\setminus V|\leq \alpha n).$$
As $|T_i\setminus V|$ is the sum of indicator random variables with mean $(1-p)|T_i|\geq (1-p)n-2K\geq \eps n-2K>2\alpha n$, we can apply the multiplicative Chernoff  bound (\Cref{lemma:chernoff}):
$$\mathbb{P}(|T_i\setminus V|\leq \alpha n)\leq \exp(-((1-p)n-2K)/8)\leq \exp(-(1-p)n/16).$$
As $T_1,\dots,T_K$ are disjoint, the events $\{|T_i\setminus V|\leq \alpha n\}$ are independent, so we get
$$\mathbb{P}(x_1,\dots,x_K\mbox{ are }\alpha\mbox{-full})\leq \exp(-(1-p)Kn/16)<4^{-n},$$
where the last inequality follows by our choice $K\geq100/\eps$. 

Say that a $K$-tuple $(x_1,\dots,x_K)$ is \emph{concentrated} if there exists a vertex $x$ such that $x_1,\dots,x_K\in B_3(x)$, and say that $(x_1,\dots,x_K)$ is bad if $x_1,\dots,x_K$ are all $\alpha$-full. The number of concentrated $K$-tuples is at most $2^nn^{3K}$, so the expected number of concentrated bad $K$-tuples is at most $(2/4)^nn^{3K}=o(1)$. Thus, by Markov's inequality, w.h.p., there are no concentrated bad $K$-tuples, finishing the proof.

\eqref{a:4} Let $x\in \{0,1\}^n$ be $\alpha$-full for some $\alpha$. For $i=1,2,3$, let $m_i=\alpha n (\eps n/12)^{i-1}$ (the definition of~$m_i$ will become clear later). Let $T_1^x=T_1:=S_1(x)\cap V$, and if $T_{i-1}$ is already defined, then let~$U_i$ be the set of vertices $x\in S_i(x)$ that have Hamming-distance~1 to some element of $T_{i-1}$. As each element of $S_{i-1}(x)$ has $n-i+1\geq n/2$ neighbours in $S_i(x)$ in the graph $Q_n$, and every element of $S_i(x)$ has $i\leq 3$ neighbours in $S_{i-1}(x)$, we have $|U_i|\geq |T_{i-1}|n/6$. Define $T_i=T_i^x=U_i\cap V$,  then $\mathbb{E}(T_i|T_{i-1},\dots,T_1)\geq |T_{i-1}|pn/6\geq |T_{i-1}|\eps n/6$. Therefore,as $m_{i}=m_{i-1}\eps n/12$,  the multiplicative Chernoff bound (\Cref{lemma:chernoff}) gives
$$\mathbb{P}\Big(|T_i|\leq m_i \mid |T_{i-1}|\geq  m_{i-1},\dots,|T_1|\geq m_1\Big)\leq \exp(m_i/6)\leq \exp(-n^{1.6}).$$
Thus,
\begin{align*}
\mathbb{P}\Big(|T_3|\geq m_3 \mid  |T_1|\geq m_1\Big)&\geq \mathbb{P}\Big(|T_3|\geq m_3 \mid |T_{2}|\geq m_{2},|T_1|\geq m_1\Big)\cdot \mathbb{P}\Big(|T_2|\geq m_2 \mid |T_1|\geq m_1\Big) \\
&\geq (1-\exp(-n^{1.6}))^2\geq 1-\exp(-n^{1.5}).
\end{align*}
There is a path of length $3$ in $G_1(V)$ between $x$ and any element of $T_3$. Choose $c$ such that $c\leq (\alpha \eps/20)^3$, then we have $m_3\geq (\alpha \eps/12)^{3}n^{3}\geq c n^3$. Hence, the probability that there exists $x\in V$ such that $x$ is $\alpha$-full  (equivalently $|T_1^x|\geq m_1=\alpha n$) and  $|T_3^x|\leq cn^3$ is  at most $2^n\exp(-n^{1.5})=o(1)$ by the union bound. This implies that \eqref{a:4} happens w.h.p..

\eqref{a:5}  For a pair of vertices $x,y\in V$ with $\dist_H(x,y)=9$, let $Y_{x,y}$ denote the number of pure paths in  $G_3(V)$ with endpoints $x$ and $y$. By \Cref{lemma:main}, noting that the conditions on $p$ and $d=3$ are trivially satisfied, there exists $c_0=c_0(\alpha)>0$ such that 
$$\mathbb{P}\left(Y_{x,y}\leq c_0^{d}q^7D^2 \mid x,y\mbox{ are not }\alpha\mbox{-full}\right)\leq \exp(-n^{1.5}).$$ Here, we have $c_0^{d}q^7D^2\geq cn^6$ for some $c$ sufficiently small with respect to $\eps$. Say that a pair $(x,y)$ is bad if $Y_{x,y}\leq cn^6$ and $x,y$ are not $\alpha$-full. Then
\begin{align*}
\mathbb{P}((x,y)\mbox{ is bad})=&\mathbb{P}(Y_{x,y}\leq cn^6\mbox{ and }x,y\mbox{ are not }\alpha\mbox{-full})\\
\leq &\mathbb{P}(Y_{x,y}\leq cn^6 \mid x,y\mbox{ are not }\alpha\mbox{-full})\leq \exp(-n^{1.5}).
\end{align*}
Hence, the expected number of bad pairs is at most $2^n\binom{n}{9}\exp(-n^{1.5})=o(1)$, so w.h.p. there are no bad pairs.

\eqref{a:6} Assume that \eqref{a:2}, \eqref{a:3} and \eqref{a:5} hold. Then given $x\in V$ that is not $\alpha$-full, there exists $y\in V$ such that $\dist_H(x,y)=9$ and $y$ is also not $\alpha$-full. But then there are at least $cn^6$ pure paths in $G_d(V)$ between $x$ and $y$. As each edge from $x$ appears in at most $n^3$ pure paths (recalling that in~\refD{def:purepath}, for each~$t$, there are at most~$D<n^3$ choices of~$u$), this implies that $|N_{d,V}(x)|\geq cn^3$.
\end{proof}

\begin{lemma}\label{lemma:conditions2}
 Let $\eps>0$, then there exists $\alpha,c>0$ such that the following holds. Let  $d \ge 3$ and $p\in (0,1/2-\eps)$ such that $q=q_d(p)\geq n^{-0.1}$ and $(\alpha/8)^d\geq n^{-0.1}$. Then w.h.p.:
 \begin{enumerate}[(i)]
    \item\label{b:1} $|V|\geq pN/2$;
    \item\label{b:2} for every $x\in \{0,1\}^n$, $|S_d(x)\cap V|\geq pD/2$;
    \item\label{b:3} there are no $\alpha$-full vertices.
    \item\label{b:4} for every $x,y\in V$ such that $\dist_H(x,y)=3d$, there are at least $c^{d}q^7D^2$ pure paths in $G_d(V)$ between $x$ and $y$;
    \item\label{b:5} for every $x\in V$, $|N_{d,V}(x)|\geq c^{d}q^7D$.
\end{enumerate}
 \end{lemma}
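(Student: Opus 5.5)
We prove the five items one at a time, mirroring the proof of \Cref{lemma:conditions1}. Take $\alpha=\eps$ (any $\alpha<1-p$ bounded away from it works), and let $c$ be smaller than the constant from \Cref{lemma:main} for this $\alpha$. Each item holds with probability $1-o(1)$, so the union of the five failure events is $o(1)$.

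Items \eqref{b:1} and \eqref{b:2} follow from the multiplicative Chernoff bound (\Cref{lemma:chernoff}). For \eqref{b:1}, $\mathbb{E}|V|=pN$. The assumption $q\ge n^{-0.1}$ forces $p\ge n^{-0.1}$, so $\mathbb{P}(|V|<pN/2)\le \exp(-pN/8)=o(1)$. For \eqref{b:2}, fix $x$. Then $|S_d(x)\cap V|$ is binomial with mean $pD\ge n^{-0.1}\binom{n}{3}\ge n^{2.8}/10$, so the failure probability is at most $\exp(-pD/8)\le \exp(-n^{2.5})$. A union bound over the $2^n$ choices of $x$ finishes.

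For \eqref{b:3}, a vertex $x$ is $\alpha$-full only if at most $\alpha n$ of its $n$ cube-neighbours are missing from $V$. The number of missing neighbours is $\mathrm{Bin}(n,1-p)$ with $1-p>1/2+\eps$. By \Cref{lemma:chernoff_concrete}, there is an $\alpha>0$, depending on $\eps$, with $\mathbb{P}(\mathrm{Bin}(n,1-p)<\alpha n)\le 2^{-(1+\alpha)n}$. We take $\alpha$ to be at most this value. Uniformity over $p\in(0,1/2-\eps)$ holds because $D(\alpha\|1-p)$ is monotone in $1-p$, so the bound for $1-p=1/2+\eps$ applies throughout. A union bound over the $2^n$ vertices gives failure probability $2^{-\alpha n}=o(1)$. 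This is the only place the restriction $p<1/2-\eps$ is used, and it is the step where the choice of $\alpha$ has to be made carefully. Once $\alpha$ is fixed, the hypotheses of \Cref{lemma:main} hold by assumption.

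For \eqref{b:4}, call a pair $(x,y)$ with $\dist_H(x,y)=3d$ bad if both endpoints lie in $V$, neither is $\alpha$-full, and $Y_{x,y}<c^dq^7D^2$. By \Cref{lemma:main},
\[
\mathbb{P}\bigl((x,y)\mbox{ bad}\bigr)\le \mathbb{P}\bigl(Y_{x,y}<c^dq^7D^2 \mid x,y\in V\mbox{ not }\alpha\mbox{-full}\bigr)\le \exp(-n^{1.5}).
\]
Here the conditioning in \Cref{lemma:main} implicitly includes $x,y\in V$. There are at most $2^n\binom{n}{3d}\le 2^n n^{3d}$ pairs. Since $(\alpha/8)^d\ge n^{-0.1}$ gives $d=O(\log n)$, this count is at most $\exp(O(n))$, so w.h.p. no bad pair exists. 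Combined with \eqref{b:3}, this proves \eqref{b:4}.

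For \eqref{b:5}, fix $x\in V$. The plan is to find some $y\in V$ with $\dist_H(x,y)=3d$ and apply \eqref{b:4} to it. Such a $y$ exists by the same Chernoff argument as in \eqref{b:2}, now applied to $S_{3d}(x)$, which has mean $p\binom{n}{3d}\gg n$. By \eqref{b:4} there are at least $c^dq^7D^2$ pure paths from $x$ to $y$ in $G_d(V)$. Each such path starts with an edge $x\,(x\oplus t)$ of $G_d(V)$, and for a fixed $t$ there are at most $D$ choices of $u$, since $f_1,f_2,f_3$ are determined by $(x,y)$. Hence the number of distinct first edges, which is a lower bound for $|N_{d,V}(x)|$, is at least $c^dq^7D^2/D=c^dq^7D$.
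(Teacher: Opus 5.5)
Your proposal is correct and follows the paper's proof essentially step for step: Chernoff for (i)--(ii), \Cref{lemma:chernoff_concrete} with a union bound for (iii), \Cref{lemma:main} with a union bound over pairs for (iv), and counting first edges of pure paths for (v). It also tidies up several points the paper glosses over, namely the pair count $2^n\binom{n}{3d}$, the uniformity of $\alpha$ over $p$, the existence of $y\in S_{3d}(x)\cap V$, and the bound $c^dq^7D$ rather than $c^dq^7pD$ in (v).

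The one slip is your opening remark that $\alpha=\eps$ (or any $\alpha$ bounded away from $1-p$) works. Item (iii) needs $D(\alpha\|1/2+\eps)>\log 2$, which forces $\alpha$ to be small and fails for $\alpha=\eps$ when $\eps$ is small. So fix $\alpha$ as in your item (iii) first, and only then take $c$ from \Cref{lemma:main}.
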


 \begin{proof}
 We highlight that the condition $q\geq n^{-0.1}$ also ensures that $p\geq n^{-0.1}$.
 
     \eqref{b:1} As $|V|$ is the sum of indicator random variables and $\mathbb{E}|V|=pN$, the multiplicative Chernoff bound implies $\mathbb{P}(|V|\leq pN/2)\leq \exp(-pN/8)=o(1)$.

\eqref{b:2} As $|S_d(x)\cap V|$ is the sum of independent indicator random variables and $\mathbb{E}|S_d(x)\cap V|=pD$, the multiplicative Chernoff bound implies that $\mathbb{P}(|S_d(x)\cap V|\leq pD/2)\leq \exp(-pD/8)\leq \exp(-n^2)$. Therefore, by the union bound, $$\mathbb{P}(\exists x, |S_d(x)\cap V|\leq pD/2)\leq 2^n\exp(-n^2)=o(1).$$

\eqref{b:3} For $x\in \{0,1\}^n$, let $Z_x=|S_1(x)\setminus V|$. Then $Z_x\sim \mbox{Binom}(n,1-p)$ and $1-p\geq 1/2+\eps$. Therefore, by \Cref{lemma:chernoff_concrete}, there exists $\alpha>0$, depending only on $\eps$, such that 
$$\mathbb{P}(Z_x\leq \alpha n)\leq 2^{-(1+\alpha)n}.$$
By the union bound,
$$\mathbb{P}(\exists x, Z_x\leq \alpha n)\leq 2^n\cdot 2^{-(1+\alpha)n}=2^{-\alpha n},$$
As $x\in V$ is $\alpha$-full if $Z_x\leq \alpha n$, this implies that w.h.p. there are no $\alpha$-full vertices. 

\eqref{b:4}  For a pair of vertices $x,y\in \{0,1\}^n$ with $\dist_H(x,y)=3d$, let $Y_{x,y}$ denote the number of pure paths in  $G_d(V)$ with endpoints $x$ and $y$. By \Cref{lemma:main}, there exists $c=c(\alpha)>0$ such that 
$$\mathbb{P}\left(Y_{x,y}\leq c^{d}q^7D^2\mid x,y\mbox{ are not }\alpha\mbox{-full}\right)\leq \exp(-n^{1.5}).$$ Say that a pair $(x,y)$ is bad if $Y_{x,y}\leq c^{d}q^7D^2$ and $x,y$ are not $\alpha$-full. Then
\begin{align*}
\mathbb{P}((x,y)\mbox{ is bad})=&\mathbb{P}\left(Y_{x,y}\leq c^{d}q^7D^2\mbox{ and }x,y\mbox{ are not }\alpha\mbox{-full}\right)\\
\leq &\mathbb{P}\left(Y_{x,y}\leq c^{d}q^7D^2 \mid x,y\mbox{ are not }\alpha\mbox{-full}\right)\leq \exp(-n^{1.5}).
\end{align*}
Hence, the expected number of bad pairs is at most $2^n\binom{n}{d}\exp(-n^{1.5})=o(1)$, so w.h.p. there are no bad pairs. But by \eqref{b:3}, there are also no $\alpha$-full vertices, and thus there are the required number of paths between every pair of vertices $x,y\in V$, $\dist_H(x,y)=3d$.

\eqref{b:5} Assume that \eqref{b:4} holds. Then given $x\in V$ that is not $\alpha$-full, there exists $y\in V$ such that $\dist_H(x,y)=3d$ and $y$ is also not $\alpha$-full. But then there are at least $c^{d}q^7D^2$ pure paths in $G_d(V)$ between $x$ and $y$. Since in~\refD{def:purepath}, for each~$t$, there are at most~$D$ choices of~$u$, this implies that $|N_{d,V}(x)|\geq c^{d}q^7pD$.

 \end{proof}

Now we are ready to prove the main result of this section, which almost immediately implies \Cref{thm:main}. Given $x,y,x',y'\in \{0,1\}^n$, say that $xy$ \emph{avoids} $x'y'$ if $\supp(x\oplus y)\cap \supp(x'\oplus y')=\emptyset$.

\begin{theorem}\label{thm:everything}
Let $\eps>0$.
\begin{enumerate}[(a)]
    \item\label{eq:thma} There exists $c_0>0$ such that if $p\in (\eps,1-\eps)$, then w.h.p. $$h(G_1(V)\cup G_3(V))\geq c_0n.$$

    \item\label{eq:thmb} There exists $c_0>0$ such that the following holds. Let $p\in (0,1/2-\eps)$ and $d\geq 3$ be odd such that $q=q_d(p)\geq n^{-0.1}$ and $d\leq c_0\log n$. Then w.h.p.  
    $$h(G_d(V))\geq \frac{c_0^{d}q^{22}D}{n}.$$
\end{enumerate}
\end{theorem}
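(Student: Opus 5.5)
We will prove both parts through \Cref{thm:congestion}. On the good event of \Cref{lemma:conditions2} (for part (b)) or \Cref{lemma:conditions1} (for part (a)), we build an A-flow on $G_d(V)$ (respectively $G_1(V)\cup G_3(V)$) whose congestion is at most $|V|\cdot n/(c_0^d q^{22}D)$ (respectively $O(|V|/n)$). Since $|V|\ge pN/2$ (or $cN$), \Cref{thm:congestion} then gives the claimed expansion.

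The construction starts from the shortest-path A-flow $\varphi$ on $Q_n^d$ given by \Cref{lemma:congQnd}, which has congestion at most $nN/D$. Every $Q_n^d$-path $x_0=x,x_1,\dots,x_k=y$ between $x,y\in V$ is lifted to $G_d(V)$ in three steps.

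\begin{enumerate}[(1)]
\item For each intermediate vertex $x_i$, choose a uniformly random representative $z_i\in S_d(x_i)\cap V$. By \eqref{b:2} there are at least $pD/2$ choices. Set $z_0=x$ and $z_k=y$.
\item Consecutive representatives satisfy $\dist_H(z_i,z_{i+1})\le 3d$. We would like this distance to be exactly $3d$, which holds if the choices avoid each other. To arrange this, we restrict to representatives $z_i=x_i\oplus g$ with $\supp(g)$ disjoint from $\supp(x_i\oplus x_{i+1})$ and from the neighbouring choices; this loses only a constant factor for $d\le c_0\log n$. If the first and last steps are awkward, we instead use $z_0=x$ with $x_1$ replaced by a $G_d$-neighbour, by \eqref{b:5}.
\item Join $z_i$ to $z_{i+1}$ by a uniformly random pure path in $G_d(V)$. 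By \eqref{b:4} there are at least $c^dq^7D^2$ of them.
\end{enumerate}

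The flow of the original path is spread uniformly over all resulting lifted paths. Concatenations may repeat vertices; this is harmless, since walks can be shortcut to paths without increasing congestion.

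Next we bound the congestion of a fixed edge $e=ab$ of $G_d(V)$. It receives flow only from segments between pairs $(z,z')$ at distance $3d$ such that $e$ lies on a pure path from $z$ to $z'$. By \Cref{lemma:path_count}, the number of such pure paths is $O(3^{3d}D^4)$, and each carries a fraction at most $(c^dq^7D^2)^{-1}$ of its segment's flow. A segment $(z,z')$ carries at most the $\varphi$-flow through the $Q_n^d$-edges $x_ix_{i+1}$ with $z\in S_d(x_i)$ and $z'\in S_d(x_{i+1})$, weighted by $(pD/2)^{-2}$. There are at most $D^2$ such edges, each with flow at most $nN/D$, so the segment flow is $O(nN/(p^2D))$. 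Multiplying,
\[
O(3^{3d}D^4)\cdot\frac{1}{c^dq^7D^2}\cdot O\!\left(\frac{nN}{p^2 D}\right)=O\!\left(\frac{27^d nND}{c^d q^7 p^2}\right).
\]
This is too large by a factor $D^2$. The fix is to exploit the geometric restriction that $z,z'$ must lie within distance $d$ of the endpoints of a $Q_n^d$-edge and within $O(d)$ of $e$. The segments through $e$ then have only $O(D)$ relevant choices rather than $D^3$, and a symmetry count, as in \Cref{lemma:path_count}, should give total congestion $O(C^d nN/(q^{O(1)}D))$. The exponent $22$ is what one gets after tracking $q^7$ from the pure paths, $p\ge q$ from the representatives, and the renormalization losses.

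\textbf{Main obstacle.} The hardest step is this double counting. The uniform choices among pure paths and representatives must be pushed through symmetry, because \Cref{lemma:path_count} is a worst-case count of all pure paths in $Q_n^d$, whereas only a $q^7$-fraction of them survive in $G_d(V)$. I would therefore normalize by the expected counts, and use \eqref{b:4} as a lower bound only to control the ratio.

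Part (a) follows the same scheme with $d=3$, but $\alpha$-full vertices must be handled separately. By \eqref{a:4}, an $\alpha$-full vertex $x$ is linked by $G_1$-paths of length $3$ to at least $cn^3$ vertices of $S_3(x)$. Since \eqref{a:3} allows only $K$ full vertices per ball $B_3$, we route out of full vertices through these short paths onto non-full representatives, and the extra congestion is only a constant factor. Full vertices are also excluded as representatives, which costs little by \eqref{a:2} and \eqref{a:3}. The congestion then comes out as $O(N/n)$ with constants depending on $\eps$, giving $h\ge c_0n$.
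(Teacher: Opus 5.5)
Your architecture matches the paper's. You reroute the shortest-path flow of \Cref{lemma:congQnd} through random representatives at distance $d$, join consecutive representatives by uniformly random pure paths, and in part (a) leave $\alpha$-full endpoints via length-$3$ $G_1$-paths.

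\textbf{The central congestion bound is not established.} Your own computation for an edge of $G_d(V)$ overshoots by $D^2$. The remedy you propose (``only $O(D)$ relevant choices'', symmetry, normalizing by expected counts) is left as ``should give'' and misidentifies where the loss comes from. The overcount is that you bound by $D^2$ the number of $Q_n^d$-edges $st$ with $\dist_H(s,z)=\dist_H(t,z')=d$. A segment is only used when $\dist_H(z,z')=3d$. Then $z\oplus z'=(z\oplus s)\oplus(s\oplus t)\oplus(t\oplus z')$ is a sum of three weight-$d$ vectors of total weight $3d$, so their supports partition $\supp(z\oplus z')$. Hence there are at most $\binom{3d}{d,d,d}\le 27^d$ such edges. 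With this your product becomes
\[
O(27^dD^4)\cdot\frac{1}{c^dq^7D^2}\cdot 27^d\cdot\frac{nN}{D}\cdot O\Big(\frac{1}{(pD)^2}\Big)=O\Big(\frac{3^{6d}nN}{c^dq^7p^2D}\Big),
\]
which, with $|V|\ge pN/2$ and $p\ge q$, is more than enough. The worst-case count of \Cref{lemma:path_count} can be used as is. The fact that only a $q^7$-fraction of pure paths survive is already compensated by $L=c^dq^7D^2$ in the denominator, so no normalization by expectations is needed. The paper's exponent $22$ is just $q^7$ from $L$, $q^{14}$ from bounding both representative probabilities by $2/b$ with $b=c^dq^7D$, and one more factor from $p$ in $|V|$.

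\textbf{The endpoints are set up incorrectly.} With $z_0=x$ you get $\dist_H(z_0,z_1)\le \dist_H(x_0,x_1)+\dist_H(x_1,z_1)=2d$. So there is never a pure path from $z_0$ to $z_1$, and the fallback you describe is not well defined. What you need is the following. Choose $z_0$ uniformly among the $G_d(V)$-neighbours of $x$ with $\supp(x\oplus z_0)$ disjoint from $\supp(x_0\oplus x_1)$; there are at least $b/2$ of them by property (v) of \Cref{lemma:conditions2}. Prepend the edge $xz_0$. The congestion of such first/last edges is $O(N/b)$: the edge must contain $x$ (or $y$), $z_0$ is hit with probability at most $2/b$, and there are at most $N$ choices of the other endpoint. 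The end segments $(z_0,z_1)$ are then controlled with $2/b$ instead of $O(1/(pD))$ for one endpoint.

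\textbf{Part (a) also needs an explicit count on $G_1$-edges.} These edges are the bottleneck, not a constant-factor overhead. An edge between $S_j(x)$ and $S_{j+1}(x)$ lies on $O(n^{2-j})$ length-$3$ paths from $x$, each used with probability at most $2/(cn^3)$. There are $O(n^j)$ such $x$ and at most $N$ choices of $y$. This gives congestion $O(N/n)$ on $G_1$-edges, compared with $O(N/n^2)$ on $G_3$-edges.
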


\begin{proof}
As the proof of the two cases diverge only slightly, we treat them simultaneously. We have the following setup in the two cases. The parameters $d,\alpha,c,K,L,a,b$ are shared across, but may take different values.
\begin{enumerate}[(a)]
    \item Let $\alpha,c,K$ be the values guaranteed by \Cref{lemma:conditions1} with respect to $\eps$, and fix some set $V\subseteq \{0,1\}^n$ satisfying \eqref{a:1}-\eqref{a:6} of \Cref{lemma:conditions1}. Set 
    \[d=3,\quad  L=cn^6, \quad \text{and}\quad a=b=cn^3.\]
    Then we have the following properties of $V$:
    \begin{enumerate}[(i)]
        \item\label{xa:1} $|V|\geq cN$;
        \item\label{xa:2} for every $x\in \{0,1\}^n$, $|S_d(x)\cap V|\geq a$;
        \item\label{xa:3} for every $x\in \{0,1\}^n$, the number of $\alpha$-full elements in $B_d(x)$ is at most $K$;
        \item\label{xa:4} for every $x\in V$, if $x$ is $\alpha$-full, then there are at least $b$ vertices in $S_d(x)$ that are the endpoints of some path of length $3$ in $G_1(V)$ starting at $x$;  
        \item\label{xa:5} for every $x,y\in V$ such that $\dist_H(x,y)=3d$ and $x,y$ are not $\alpha$-full, there are at least $L$ pure paths in $G_d(V)$ between $x$ and $y$;
        \item\label{xa:6} for every $x\in V$, if $x$ is not $\alpha$-full, then $|N_{d,V}(x)|\geq b$.
    \end{enumerate}

    \item  Let $\alpha,c$ be the values guaranteed by \Cref{lemma:conditions2} with respect to $\eps$. We assume that $c_0<\frac{1}{10}(\log\frac{8}{\alpha})^{-1}$, then $d\leq c_0\log n$ implies $(\alpha/8)^d\geq n^{-0.1}$. Therefore, the requirements of \Cref{lemma:conditions2} are satisfied for any odd $d$ allowed by~\eqref{eq:thmb}. Fix $V\subseteq \{0,1\}^n$ satisfying \eqref{b:1}-\eqref{b:5} of \Cref{lemma:conditions2}. Set 
    \[K=0,\quad L=c^dq^7D^2,\quad a=pD/2, \quad\text{and}\quad b=c^dq^7D.\]
    Then we have the following properties of $V$:
    \begin{enumerate}[(i)]
    \item\label{xb:1} $|V|\geq pN/2$;
    \item\label{xb:2} for every $x\in \{0,1\}^n$, $|S_d(x)\cap V|\geq a$;
    \item\label{xb:3} there are no $\alpha$-full vertices.
    \item\label{xb:4} for every $x,y\in V$ with $\dist_H(x,y)=3d$, there exist $L$ pure paths in $G_d(V)$ between $x$ and $y$;
    \item\label{xb:5} for every $x\in V$, $|N_{d,V}(x)|\geq b$.
    \end{enumerate}
\end{enumerate}
In the rest of the proof, we view $V$  as a deterministic object having the appropriate set of properties. As these properties hold w.h.p., it suffices to prove that $h(G_1(V)\cup G_3(V))\geq c_0n$ in case \eqref{eq:thma}, and $h(G_d(V))\geq c_0^{d}q^{22}D$ in case \eqref{eq:thmb} for some $c_0=c_0(\eps)>0$.

\medskip

 First, consider $Q_n^d$. By \Cref{lemma:congQnd}, 
 $$\cng(Q_n^d)\leq \frac{nN}{D}.$$ 
 Therefore, by \Cref{lemma:path-system}, for every $x,y\in \{0,1\}^n$, there is a $Q_n^d$-path $P_{x,y}$ connecting $x$ and $y$ such that every $e\in E(Q_n^d)$ is contained in at most 
 \begin{equation}\label{eq:def:T}
     T:=\frac{2nN}{D}
 \end{equation}
of the paths $P_{x,y}$. For every $x,y\in V$, we redistribute the weight of $P_{x,y}$ using paths in $G_1(V)\cup G_d(V)$.

 Fix some $x,y\in V$ and let $x=x_0,\dots,x_k=y$ be the vertices of $P_{x,y}$, where $k=k_{x,y}$ is the length of the path. For $i=0,\dots,k$, we randomly choose a vertex $z_i\in S_d(x_i)$ from a set~$N_i$ (the non-emptyness of which is proved in~\Cref{claim:Ni} below) in the following manner.

 In case $i=0$ or $i=k$, we proceed depending on whether $x_i$ is $\alpha$-full (which can only happen if we are in case~\eqref{eq:thma}).

\begin{description}
    \item[Case 1.] $x_i$ is $\alpha$-full. 
    
    Let $N_i$ be the set of points $z\in S_d(x_i)$ such that $z$ is not $\alpha$-full, there is a path of length $3$ in $G_1(V)$ between~$x$ and~$z$, and~$x_0z$ avoids~$x_0x_{1}$ when~$i=0$, and~$x_kz$ avoids~$x_{k-1}x_k$ when~$i=k$. Choose~$z_i$ uniformly at random from the set~$N_i$. Let
    \[R_i=R_i^{x,y}\]
    be a path of length~$3$ in~$G_1(V)$ connecting~$x$ and~$z_i$.
    \item[Case 2.] $x_i$ is not $\alpha$-full. 
    
    Let $N_i$ be the set of points $z\in N_{d,V}(x_i)$ such that $z$ is not $\alpha$-full, and~$x_0z$ avoids~$x_0x_{1}$ when~$i=0$ and~$x_kz$ avoids~$x_{k-1}x_k$ when~$i=k$.. Choose~$z_i$ uniformly at random from the set~$N_i$. Let 
    \[R_i=R_i^{x,y}\]
    to be the one edge path between~$x_i$ and~$z_i$.
\end{description}
In case $i=1,\dots,k-1$, we proceed one-by-one as follows. For $i=1,\dots,k-2$, if $z_{i-1}$ is already chosen, we choose~$z_i$ uniformly at random from the set~$N_i$ defined as follows. A vertex~$z\in S_d(x_i)$ is an element of~$N_i$ if~$z$ is not $\alpha$-full, $zx_i$ avoids $x_{i-1}x_i,x_ix_{i+1},x_{i-1}z_{i-1}$, and in case $i=k-1$, also avoids~$x_{k}z_{k}$.

This choice of $z_0,\dots,z_k$ ensures that $\dist_H(z_{i-1},z_{i})=3d$ for $i=1,\dots,k$, and $z_0,\dots,z_k$ are not~$\alpha$-full.
\begin{claim}\label{claim:Ni}
$|N_i|\geq b/2$ for $i=0,\dots,k$.
\end{claim}

\begin{proof}
First, consider the case if $i=0$ or $i=k$. If $x_i$ is $\alpha$-full, then we are in case~\eqref{eq:thma}, and by \eqref{a:4} there is a set $T\subseteq S_d(x_i)$ of size  at least $b=cn^3$ such that every element of $T$ is connected to $x$ by a path of length $3$ in $G_1(V)$.  As $|\supp(x_0\oplus x_{1})|=|\supp(x_{k-1}\oplus x_k)|=3$, there are at most $O(n^{2})$ elements $z\in T$ such that $\supp(z\oplus x_0)\cap \supp(x_0\oplus x_{1})\neq \emptyset$ when~$i=0$ and $\supp(z\oplus x_k)\cap \supp(x_{k-1}\oplus x_{k})\neq \emptyset$ when~$i=k$. Also, there are at most $K$ $\alpha$-full elements in $T$. Thus, $|N_i|\geq |T|-O(n^2)-K\geq b/2$.

If~$x_i$ is not $\alpha$-full, then  $|N_{d,V}(x_i)|\geq b\geq n^{d-0.9}$. Also, at most~$K$ elements of~$N_{d,V}(x_i)$ are $\eps$-full, and there are at most $O(n^{d-1})$ elements $z\in T$ such that $\supp(z\oplus x_0)\cap \supp(x_0\oplus x_{1})\neq \emptyset$ when $i=0$ and $\supp(z\oplus x_k)\cap \supp(x_{k-1}\oplus x_{k})\neq \emptyset$ when $i=k$. Therefore, $|N_i|\geq |N_{d,V}(x_i)|-O(n^{d-1})-K\geq b/2$.

Next, consider the case $i=1,\dots,k-1$. We have $|S_d(x_i)\cap V|\geq a\geq b$. Among the elements $S_d(x_i)\cap V$, at most~$K$ are  $\eps$-full, and there are at most $O(n^{d-1})$ elements $z\in S_d(x_i)$ such that $\supp(z\oplus x_i)$ intersects any of $\supp(x_{i-1}\oplus x_i),\supp(x_i\oplus x_{i+1}),\supp(x_{i-1}\oplus z_{i-1})$, and in case $i=k-1$, also $\supp(x_{k}\oplus z_{k})$. Thus, $|N_i|\geq |S_3(x_i)\cap V|-O(n^{d-1})-K\geq cD/2\geq b/2$.
\end{proof}
The previous claim ensures that for every fixed $z\in S_d(x_i)$, we have 
\begin{equation}\label{eq:Pzzi}
   \mathbb{P}(z=z_i)\leq \frac{2}{b}. 
\end{equation}
Now for $i=1,\dots,k$, let $Z_i=Z_{i}^{x,y}$ be chosen uniformly at random from the set of all the pure paths in~$G_d(P)$ between~$z_{i-1}$ and~$z_{i}$. Let $Z=Z^{x,y}$ be the path that is the union of the paths $R_0,Z_{1},\dots,Z_k,R_k$. Formally, $Z$ is a walk, not necessarily a path, but we can always just take a subset forming a path.

 We define the A-flow $\varphi$ on $G_1(V)\cup G_d(V)$ such that for every path $R$ between $x$ and $y$, we set $$\varphi(R):=\mathbb{P}(Z=R).$$
 In what follows, we study the congestion of $\varphi$. For a fixed edge $e\in E(G_1(V))\cup E(G_d(V))$, we bound the congestion 
 $$\cng_{\varphi}(e)=\sum_{R\in\mathcal{P}:e\in R}\varphi(R)=\sum_{x,y\in V}\mathbb{P}(e\in Z^{x,y}),$$
which is also the expected number of pairs $(x,y)$ for which $e\in Z^{x,y}$.

First, consider the case $e\in E(G_1(V))$. Such an edge is only contained in a path in case~\eqref{eq:thma}. The edge~$e$ is contained in $Z^{x,y}$ if $e\in R_0^{x,y}$ or $e\in R_{k}^{x,y}$.  If $e\in R_0^{x,y}$, 
then $e$ connects $S_j(x)$ and $S_{j+1}(x)$ for some $j\in \{0,1,2\}$.
By~\eqref{eq:Pzzi}, for every path~$R$ of length~$3$ starting with~$x$, we have 
$$\mathbb{P}(R=R_0^{x,y})\leq \frac{2}{b}=\frac{2}{cn^3},$$ and~$e$ can be contained in at most~$O(n^{2-j})$ such paths for a given~$x$. Therefore,
$$\mathbb{P}(e\in R_0^{x,y})=O\Big(\frac{1}{cn^{j+1}}\Big).$$ 
The similar argument holds for the case that $e\in R_k^{x,y}$. 
On the other hand, the number of elements~$x$ such that~$e$ connects~$S_j(x)$ and~$S_{j+1}(x)$ is~$O(n^{j})$.
Thus, we get
\begin{equation}\label{eq:congphie}
    \cng_{\varphi}(e)=\sum_{x,y\in V}\mathbb{P}(e\in Z^{x,y})=\sum_{x,y\in V}\mathbb{P}(e\in R_0^{x,y}\cup R_k^{x,y})=N\cdot\sum_{j=0}^2 O\left(\frac{1}{cn^{j+1}}\cdot n^{j}\right)=O\left(\frac{N}{cn}\right).
\end{equation}

Next, we consider the case $e\in G_d(V)$. By~\Cref{claim:Ni} the probability that $e$ is the first or last edge of~$Z^{x,y}$ is at most $2/b$, and there are at most $|V|\leq N$ values of $x$ or $y$ for which it can be a first or last edge. Therefore,
$$\sum_{x,y\in V}\mathbb{P}(e\mbox{ is a first or last edge of }Z^{x,y})\leq \frac{4\cdot 2N}{b}.$$
Now consider the events that $e$ is an inner edge of a path $Z^{x,y}$. Then $e$ is in some pure path $Z_i^{x,y}$, so 
\begin{align*}
\sum_{x,y\in V}\mathbb{P}(x,y\mbox{ is an inner edge of }Z^{x,y})&\leq \sum_{x,y\in V}\sum_{i=1}^{k_{x,y}}\mathbb{P}(e\in Z^{x,y}_i)\\
&=\sum_{\substack{F:e\in F,\\F\text { is a pure path}}}\sum_{x,y\in V}\sum_{i=1}^{k_{x,y}}\mathbb{P}(F=Z^{x,y}_i).
\end{align*}
For fixed $F$ and $x,y,i$, we have 
$$\mathbb{P}(F=Z^{x,y}_i)\leq\frac{4}{b^2L}.$$
Indeed, if $\tilde{x}$ and $\tilde{y}$ are the endpoints of $F$, then by~\eqref{eq:Pzzi}, $\max(\mathbb{P}(\tilde{x}=z_{i-1}),\mathbb{P}(\tilde{y}=z_{i}))\leq \frac{2}{b}$, and if $\tilde{x}=z_{i-1}$ and $\tilde{y}=z_i$, then the probability that $F$ is chosen among the at least $L$ pure paths between $\tilde{x}$ and $\tilde{y}$ is at most $1/L$. Moreover, $\mathbb{P}(F=Z^{x,y}_i)=0$, unless $\dist_H(\tilde{x},\tilde{y})=3d$ and the edge~$x_{i-1}x_i$ of $P_{x,y}$ satisfies $\dist_H(x_{i-1},\tilde{x})=\dist_H(x_i,\tilde{y})=d$. When $\dist_H(\tilde{x},\tilde{y})=3d$, the number of edges $st\in E(Q_n^d)$ such that $\dist_H(s,\tilde{x})=d$ and $\dist_H(t,\tilde{y})=d$ is at most $\binom{3d}{d,d,d}= \frac{(3d)!}{(d!)^3}\leq 3^{3d}$. Moreover, by~\eqref{eq:def:T},  each such edge~$st$ is contained in at most $T$ paths $P_{x,y}$, showing that
$$\sum_{x,y\in V}\sum_{i=1}^{k_{x,y}}\mathbb{P}(F=Z^{x,y}_i)\leq 3^{3d} T\cdot \frac{4}{b^2L}.$$
Finally, $e$ is contained in at most $O(3^{3d}D^4)$ pure paths by \Cref{lemma:path_count}, so
$$\sum_{\substack{F:e\in F,\\F\text { is a pure path}}}\sum_{x,y\in V}\sum_{i=1}^{k_{x,y}}\mathbb{P}(F=Z^{x,y}_i)\leq O\Big(3^{3d} T\cdot \frac{4}{b^2L}\cdot 3^{3d}D^4\Big)=O\left(\frac{3^{6d}D^3nN}{b^2L}\right).$$
 Therefore, we get the following bound for the congestion of $e$:
\begin{align}
\cng_{\varphi}(e)&\leq \sum_{x,y\in V}\mathbb{P}(e\mbox{ is a first or last edge of }Z^{x,y})+\mathbb{P}(x,y\mbox{ is an inner edge of }Z^{x,y}) \nonumber\\
&\leq  O\left(\frac{N}{b}+\frac{3^{6d}D^3nN}{b^2L}\right)=\begin{cases}O\left(N/(c^3n^2)\right)  &\mbox{in case~\eqref{eq:thma}},\\
O\left(3^{6d}nN/(c^{3d}q^{21}D)\right) &\mbox{in case~\eqref{eq:thmb}}.
\end{cases}\label{eq:twocases}
\end{align}
In case ~\eqref{eq:thma}, summarizing~\eqref{eq:congphie} and~\eqref{eq:twocases}, we showed that if $e\in G_1(V)$, then $\cng_{\varphi}(e)=O(N/(cn))$, and if  $e\in G_3(P)$, then  $\cng_{\varphi}(e)=O(N/(c^3n^2))$. Therefore, $\cng(\varphi)=O(N/(cn))$. Thus, by \Cref{thm:congestion}, 
$$h(G_1(V)\cup G_3(V))\geq \frac{|V|}{2\cng(G_1(V)\cup G_d(V))}\geq \frac{cN}{O(N/cn)}=\Omega(c^2 n).$$ 
In case~\eqref{eq:thmb}, we have $\cng(\varphi)=O(3^{6d}nN/(c^{3d}q^{21}D)$. Thus, by \Cref{thm:congestion}, 
$$h(G_d(V))\geq \frac{|V|}{2\cng(G_d(V))}\geq \frac{pN}{O(3^{6d}nN/(c^{3d}q^{21}D)}\geq \Omega\left(\left(\frac{c^{3}}{3^6}\right)^d\cdot \frac{q^{22}D}{n}\right).$$
This completes the proof of~\Cref{thm:everything}.
\end{proof}

\begin{proof}[Proof of \Cref{thm:main}] 
\eqref{eq:maina} $G_1(V)\cup G_3(V)$ is a subgraph of $G$, so we have $h(G)\geq h(G_1(V)\cup G_3(V))$. By \Cref{thm:everything}, w.h.p. $h(G_1(V)\cup G_3(V))\geq cn$ for some $c>0$ only depending on $\eps$.

\eqref{eq:mainb}  By \Cref{thm:everything}, there exists $c_0\in (0,1)$ such that for every $p\in (0,1/2-\eps)$ and odd $d$ satisfying $$d\leq c_0\log n\quad\mbox{ and }\quad q=p(1-p)^{2^d-2}\geq n^{-0.1},$$ we have w.h.p. $$h(G_d(V))\geq \frac{c_0^{d}q^{22}D}{n}.$$ 
Let $p\in (n^{-0.05},1/2-\eps)$, and choose
$$d=\left\lfloor \frac{1}{2}\log_2\log n+c_0\log_2 (1/p)\right\rfloor.$$
If $d$ is even, replace $d$ with $d-1$. Then we have $d\leq c_0\log n$ and
$$p(1-p)^{2^d-2}\geq n^{-0.05}(1-p)^{2^d}\geq n^{-0.05}(1-p)^{\frac{1}{p}(\log n)^{1/2}}=n^{-0.05+o(1)}.$$
Thus, we have w.h.p.
$$h(G_d(V))\geq \frac{c_0^{d}q^{22}D}{n}\geq c_0^{c_0\log n} n^{-2.2-1}\left(\frac{n}{d}\right)^d>n^{d/2}.$$
Therefore, w.h.p.
$$h(G)\geq h(G_d(V))\geq n^{d/2}\geq n^{\Omega(\log\log n +\log (1/p))},$$
which completes the proof.
\end{proof}

%\section*{Acknowledgments}

%IT acknowledges the support of the Swedish Research Council grant VR 2023-03375.
\end{document}